\newtheorem{theorem}{Theorem}[section]
\newtheorem{corollary}[theorem]{Corollary}
\newtheorem{lemma}[theorem]{Lemma}
\newtheorem{definition}[theorem]{Definition}
\newtheorem{example}{Example}[section]
\newtheorem{remark}[theorem]{Remark}
\def\endproof{\hfill $\Box$ \vskip 0.4cm}
\newcommand{\f}{\mathscr{F}}
\newcommand{\lr}{\mathcal{L}}
\newcommand{\e}{\mathbb{E}}
\newcommand{\br}{\mathbb{R}}
\newcommand{\pr}{\mathcal{P}}
\newcommand{\p}{\mathcal{P}}
\newcommand{\dd}{\mathcal{D}_{C_1}}
\newcommand{\argmin}{\mathop{\rm argmin}}
\newcommand{\tr}{\textnormal{Tr}\,}
\newcommand{\hr}{\mathcal{H}}
\title{Fully Coupled Nonlocal Quasilinear Forward-Backward Parabolic Equations Arising from Mean Field Games}
\author{Ziyu Huang\footnotemark[1] \and Shanjian Tang\footnotemark[2]} 
\date{}
\begin{document}	

\maketitle

\renewcommand{\thefootnote}{\fnsymbol{footnote}}

\footnotetext[1]{School of Mathematical Sciences, Fudan University, Shanghai 200433, China. E-mail: zyhuang19@fudan.edu.cn.}
\footnotetext[2]{Department of Finance and Control Sciences, School of Mathematical Sciences, Fudan University, Shanghai 200433, China. E-mail: sjtang@fudan.edu.cn. Research partially supported by
	National Natural Science Foundation of China (Grants No. 11631004 and No. 12031009).}

\begin{abstract}
In this paper, we study fully coupled nonlocal second order quasilinear forward-backward partial differential equations (FBPDEs), which arise from solution of the mean field game (MFG) suggested by Lasry and Lions [Japan. J. Math. 2 (2007), p. 237 (Remark iv)]. We show the existence of solutions $(u,m)\in C^{1+\frac{1}{4},2+\frac{1}{2}}([0,T]\times\br^n)\times C^{\frac{1}{2}}([0,T],\p_1(\br^n))$, and also the uniqueness under an additional monotonicity condition. Then, we improve the regularity of our weak solution $m$ to get a classical solution under appropriate regularity assumptions on coefficients. The FBPDEs can be used to investigate a system of mean field equations (MFEs), where the backward one is a Hamilton-Jacobi-Bellman equation and the forward one is a Fokker-Planck equation. Moreover, we prove a verification theorem and give an optimal strategy of the associated MFG via the solution of MFEs. Finally, we address the linear-quadratic problems.
\vskip 4.5mm

\noindent \begin{tabular}{@{}l@{ }p{10.1cm}} {\bf Keywords } &
	FBPDEs, mean field equation, Fokker-Planck equation, HJB equation, MFGs
\end{tabular}

\noindent {\bf 2000 MR Subject Classification } 93E20, 60H30, 35K55, 35K10

\end{abstract}
	
\section{Introduction}

In this paper, we consider the following second order forward-backward partial differential equations (FBPDEs):
\begin{equation}\label{In:3}
	\left\{
	\begin{aligned}
		&\frac{\partial u}{\partial t}(t,x)+\sum_{i,j=1}^na_{ij}(t,x,m(t,\cdot))\frac{\partial^2u}{\partial x_i\partial x_j}(t,x)\\
		&\qquad+\hr(t,x,m(t,\cdot),Du(t,x))=0,\quad (t,x)\in(0,T]\times\br^n;\\
		&\frac{\partial m}{\partial t}(t,x)-\sum_{i,j=1}^n\frac{\partial^2}{\partial x_i\partial x_j}[a_{ij}(t,x,m(t,\cdot))m(t,x)]\\
		&\qquad+div[\frac{\partial \hr}{\partial p}(t,x,m(t,\cdot),Du(t,x))m(t,x)]=0,\quad (t,x)\in[0,T)\times\br^n;\\
		&m(0,x)=m_0(x), \quad u(T,x)=g(x,m(T,\cdot)),\quad x\in\br^n.
	\end{aligned}
	\right.
\end{equation}
It is a system of fully coupled forwardly nonlocal quasilinear forward-backward parabolic equations, where
\begin{align*}
	&\sigma:[0,T]\times\br^n\times\pr(\br^n)\to\br^{n\times n},\quad a_{ij}=\frac{1}{2}(\sigma\sigma^*)_{ij},\quad 1\le i,j\le n;\\
	&\hr:[0,T]\times\br^n\times\pr(\br^n)\times\br^n\to\br;\quad g:\br^n\times\pr(\br^n)\to\br.
\end{align*}
In general, the coupling between the first and the second equation makes the system a novel one for which no existing theory or approach seems to be applicable directly---as has been noted for the special case of free  mean field  in both of the drift and diffusion $\sigma$ by Lasry and Lions~\cite[the third paragraph, p. 232]{JM3}. 

Our main result is to show the existence of solutions $(u,m)\in C^{1+\frac{1}{4},2+\frac{1}{2}}([0,T]\times\br^n)\times C^{\frac{1}{2}}([0,T],\p_1(\br^n))$ of FBPDEs \eqref{In:3} (see Theorem~\ref{thm:exist}). We also show the uniqueness under additional monotonicity conditions (see Theorem~\ref{thm:uni}). Moreover, we improve the regularity of our weak solution $m$, and give the existence result of classical solutions of FBPDEs \eqref{In:3} in $C^{1+\frac{1}{4},2+\frac{1}{2}}([0,T]\times\br^n)\times C^{1+\frac{1}{4},2+\frac{1}{2}}([0,T]\times\br^n)$ under appropriate regularity assumptions on coefficients (see Theorem~\ref{thm:improve}). For the particular case
\begin{align*}
	\hr(t,x,m,p):=H(t,x,m,\phi(t,x,m,p),p),\quad (t,x,m,p)\in[0,T]\times\br^n\times\pr(\br^n)\times\br^n,
\end{align*}
where $H$ is a Hamiltonian and $\phi(t,x,m,p)$ is the minimizing feedback control function of $(t,x,m,p)$, FBPDEs \eqref{In:3} is a system of mean field equations (MFEs), where the backward one is a Hamilton-Jacobi-Bellman (HJB) equation and the forward one is a Fokker-Planck (FP) equation with the diffusion coefficient $\sigma (t,x,m)$ varying with both of the state and the mean-field---the mean field game (MFG) setting suggested by Lasry and Lions~\cite[Remark iv, p. 237]{JM3}. We give the existence and uniqueness results for the MFEs (see Theorems~\ref{thm:mfe_exist} and \ref{thm:mfe_uni}) and show that for a solution of the MFEs, the feedback strategy is an optimal control for the associated MFG (see Theorem~\ref{thm_mfg}). And our results can also be applied to linear-quadratic cases (see Section~\ref{EXA}).

Solution of optimal control problems for either of stochastic differential equations (SDEs) or evolutionary equations in an infinite dimensional space,  appeals to a very closely system of FBPDEs. Fleming~\cite{Fleming} characterized the optimal feedback control of a stochastic control problem as the following system of FBPDEs:
\begin{equation*}\left\{\begin{aligned}
		&\frac{\partial u}{\partial t} +\max_{\alpha}\Big\{\frac{1}{2}tr[ (\sigma\sigma^*)(t,x,\alpha)D^2u]+b(t,x, \alpha)\cdot D u+f(t,x,\alpha)\Big\}=0;\\
		&\frac {\partial m}{\partial t}-\frac{1}{2}tr D^2[(\sigma\sigma^*)(t,x,\alpha(t,x))m]+div[b(t,x,\alpha(t,x))m]=0;\\
		&m(0,x)=m_0(x),\quad u(T,x)=g(x, m(T)),\quad x\in\br^n.
	\end{aligned}\right.
\end{equation*}
It is locally dependent on both unknown functions $u$ and $m$. The maximum principle for optimal control of an evolutionary equation leads to the following system of FBPDEs:
\begin{equation*}\left\{\begin{aligned}
		&du(t,\cdot)=[-A^*u(t,\cdot)-h(u(t,\cdot),m(t, \cdot))]\,dt, \quad u(T,x)=g(x, m(T));\\
		&dm(t, \cdot)=[Am(t, \cdot)+b(m(t, \cdot),u(t,\cdot))]\, dt, \quad m(0,x)=m_0(x)
	\end{aligned}\right.
\end{equation*}
when the generator $A$ is a partial differential operator. It is non-locally dependent on both unknown functions $u$ and $m$.
See J. L. Lions~\cite{LionsJL} and Yong~\cite{Yong} for details. Lasry and Lions \cite{JM1,JM2,JM3} proposed the following MFEs:
\begin{equation}\label{In:0}
	\left\{
	\begin{aligned}
		&\frac{\partial u}{\partial t}(t,x)+\Delta u(t,x)-H(t,x,Du)+F(t,x,m(t,\cdot))=0,\quad (t,x)\in[0,T)\times\br^n;\\
		&\frac{\partial m}{\partial t}(t,x)-\Delta m(t,x)- div[\frac{\partial H}{\partial p}(t,x,Du)m]=0,\quad (t,x)\in(0,T]\times\br^n;\\
		&m(0,x)=m_0(x), \quad u(T,x)=g(x,m(T,\cdot)),\quad x\in\br^n.
	\end{aligned}
	\right.
\end{equation}
They have received  much attention. See \cite{AB2,PC,CAR,POA} for the resolvability of MFEs for different sakes and in different settings. In a series of lectures given at the Collége de France \cite{PC}, Lions showed the existence and uniqueness of classical solutions of the particular second order MFEs~\eqref{In:0} with $H(t,x,p)=\frac{1}{2}|p|^2$. Porretta \cite{POA} established the existence and uniqueness of weak solutions of MFEs \eqref{In:0}. Bensoussan et al. \cite{AB2} solved the MFEs for linear quadratic MFGs for  $\sigma$ is a constant. The proof of the resolvability of non-linear FBPDEs is not trivial even when $\sigma$ is a constant. In general, a classical solution of the system is not available in the literature, even under high regularity assumptions on the coefficients. In this paper, we generalize Lions' existence and uniqueness results \cite[Theorems 3.1 and 3.6]{PC} to the more general one \eqref{In:3}. We do not need the linear quadratic condition and allow the volatility $\sigma$ to depend on the state and the distribution of the state. The appearance of the distribution variable in the volatility complicates the structure of FBPDEs \eqref{In:3}, makes it difficult to be linked to an optimal control problem for the nonlinear FP equation (as it is in Lasry and Lions~\cite[Subsection 2.6, pp. 249-250]{JM3}), and also increases the difficulty in the solution.  The reader is referred to  a few facts collected by  Porretta \cite[p. 3]{POA}, concerning, independently, FP and viscous Hamilton–Jacobi equations,  so as to understand properly the relevance and the difficulties related to the question of the uniqueness and regularity of solutions to FBPDEs \eqref{In:3}.
We first show a weak solution of FBPDEs \eqref{In:3} in $C^{1+\frac{1}{4},2+\frac{1}{2}}([0,T]\times\br^n)\times C^{\frac{1}{2}}([0,T],\p_1(\br^n))$ and then improve the regularity of the weak solution $m$ to get a classical solution under further regularity assumption on the coefficients. The solubility result seems to be new. The proof of the existence result is based on the Schauder's  fixed point theorem and relies on the theory of quasilinear parabolic equations~\cite{LO}. We also generalizes Lions' result \cite[Lemma 3.7, p.15]{PC} of MFGs and apply our result to linear-quadratic problems. 

The FP equations for McKean-Vlasov SDEs can be traced back to McKean \cite{McK} and were subsequently studied in more general settings \cite{JB,AS}. Barbu and Röckner \cite{BAR} considered the FP equations for the McKean-Vlasov SDEs for the case of Nemytskii-type coefficients. Tse \cite{TSE} considered the higher order regularity of nonlinear FP equations. Huang and Tang \cite{OUR1} studied the regularity of nonlinear FP equations in $L^2$ and $L^1$ spaces. In this paper, our existence result of FBPDEs \eqref{In:3} appeals to the existence result of the FP equation for McKean-Vlasov SDEs with mean-field-dependent volatility (see Remark~\ref{rk:FP}).

FBPDEs \eqref{In:3} can be decoupled with the help of the so-called ``master equation", which is a second-order PDE stated on the space of probability measures:
\begin{equation}\label{master}
	\left\{
	\begin{aligned}
		&\frac{\partial U}{\partial t}(t,x,\mu)+\sum_{i,j=1}^na_{ij}(t,x,\mu)\frac{\partial^2U}{\partial x_i\partial x_j}(t,x,\mu)+\hr(t,x,\mu,DU(t,x,\mu))\\
		&\qquad +\int_{\br^n}\Big[ \sum_{i,j=1}^na_{ij}(t,\xi,\mu)\frac{\partial^2}{\partial\xi_i\partial\xi_j}\frac{\partial U}{\partial \mu}(t,x,\mu)(\xi)\\
		&\qquad+\sum_{i=1}^n\frac{\partial\hr}{\partial p_i}(t,\xi,\mu,DU(t,\xi,\mu))\frac{\partial}{\partial\xi_i}\frac{\partial U}{\partial \mu}(t,x,\mu)(\xi)\Big]\mu(d\xi)=0,\\ &\qquad(t,x,\mu)\in[0,T]\times\br^n\times\pr(\br^n);\\
		&U(T,x,\mu)=g(x,\mu),\quad (x,\mu)\in\br^n\times\pr(\br^n).
	\end{aligned}
	\right.
\end{equation}
In fact, the solution of the last equation is associated to that of FBPDEs \eqref{In:3} in the following way
\begin{align}\label{decouple}
	u(t,x)=U(t,x,m(t,\cdot)),\quad (t,x)\in[0,T]\times\br^n.
\end{align}
The concept of master equation is introduced by P. L. Lions in his lectures at Collége de France. A PDE defined on the space of probability measures is quite different from that defined on an Euclidean space, and the solvability of such a PDE is difficult to obtain. Bensoussan et al. \cite{AB0,AB4} interpreted the master equation for both MFGs and mean field type control problems. Buckdahn et al. \cite{BR} solved the linear Kolmogorov PDE with a probabilistic method. See \cite{CAR,CHA,CD} for the solubility of master equations for different sakes and in different settings. Bensoussan et al. \cite{AB2} gave the master equations associated with stochastic MFGs, where the HJB-FP equations are forward-backward stochastic PDEs. 
Our work \cite{OUR1} discussed solubility of master equations of particular types. In Subsection~\ref{discuss}, we give a discussion on the relations between MFEs and master equation \eqref{master}. Solution of the master equation can be used to construct that of the FBPDEs \eqref{In:3}, however, it requires regularity conditions of the coefficients with respect to the distribution variable, and an analytic approach to the master equation is not available and quite appealing. So in this work, we directly study the solubility of FBPDEs \eqref{In:3}. Actually, very few works address solubility of FBPDEs\eqref{In:3}  in the literature. This work is a merely preliminary result and  many questions still remain to be further studied. It is interesting and  challenging to relax  our assumptions on the coefficients. 

The paper is organized as follows. In Section~\ref{MFE}, we give the existence and uniqueness of solutions $(u,m)\in C^{1+\frac{1}{4},2+\frac{1}{2}}([0,T]\times\br^n)\times C^{\frac{1}{2}}([0,T],\p_1(\br^n))$ of FBPDEs \eqref{In:3}, and improve the regularity of $m$ to give the existence result of classical solutions of FBPDE \eqref{In:3} in $C^{1+\frac{1}{4},2+\frac{1}{2}}([0,T]\times\br^n)\times C^{1+\frac{1}{4},2+\frac{1}{2}}([0,T]\times\br^n)$. In Section~\ref{MFG}, we show the resolvability of MFEs corresponding to MFGs and prove a verification theorem to construct an optimal strategy of the associated MFG, and we also give a discussion on the relations between MFEs and master equations. In Section~\ref{EXA}, we address the linear-quadratic problems.

\subsection{Notations}
Let $(\Omega,\f,\mathbb{P})$ denote a complete filtered probability space augmented by all the $\mathbb{P}$-null sets on which an $n$-dimensional Brownian motion $\{W_t,\ 0\le t\le T\}$ is defined. $\lr(\cdot)$ is the law. Let $\mathcal{S}^2_{\f}(0,T)$ denote the set of all $\f_t$-progressively-measurable $\br^n$-valued processes $\beta=\{\beta_t,\ 0\le t\le T\}$ such that $\e[\sup_{0\le t\le T} |\beta_t|^2]<+\infty$. Let $\pr(\br^n)$ denote the space of all Borel probability measures on $\br^n$, and $\pr_1(\br^n)$ the space of all probability measures $m\in\pr(\br^n)$ such that 
\begin{equation*}
	|m|_{\pr_1(\br^n)}:=\int_{\br^n} |x|m(dx)<\infty.
\end{equation*}
The Kantorovitch-Rubinstein distance is defined on $\pr_1(\br^n)$ by
\begin{equation*}
	d_1(m_1,m_2):=\inf_{\gamma\in\Pi(m_1,m_2)}\int_{\br^{2n}} |x-y|d\gamma(x,y),\quad m_1,m_2\in\pr_1(\br^n),
\end{equation*}
where $\Gamma(m_1,m_2)$ denotes the collection of all probability measures on $\br^{2n}$ with marginals $m_1$ and $m_2$. The space $(\pr_1(\br^n),d_1)$ is a complete separable metric space \cite{book_mfg}. Let $\pr_2(\br^n)$ the space of all probability measures $m\in\pr(\br^n)$ such that 
\begin{equation*}
	|m|_{\pr_2(\br^n)}:=\Big(\int_{\br^n} |x|^2m(dx)\Big)^{\frac{1}{2}}<\infty. 
\end{equation*}
We denote by $C^{0}([0,T],\pr_1(\br^n))$ the set of all distribution flows $\mu$ such that $[0,T]\ni t\to \mu(t)\in\pr_1(\br^n)$ is continuous. For $\alpha\in(0,1)$, we denote by 
\begin{align*}
	C^{\alpha}([0,T],\pr_1(\br^n)):=\{\mu\in C^0([0,T],\p_1(\br^n)):\sup_{s\neq t}\frac{d_1(\mu(s),\mu(t))}{|t-s|^{\alpha}}<+\infty\}.
\end{align*}
Let $O$ be a domain in $\br^n$ and $Q_T$ be the cylinder $(0,T)\times O$. For $l>0$, let $C^{\frac{l}{2},l}(\bar{Q}_T)$ denote the set of all functions on $\bar{Q}_T$ having all the continuous derivatives $D^r_tD^s_x$ with $2r+s<l$ and having a finite norm
\begin{equation*}
	|u|_{Q_T}^{(l)}=\sum_{2r+s\le [l]}\sup_{Q_T}|D^r_tD^s_xu|+\sum_{0<l-2r-s<2}\langle D_t^rD_x^su \rangle_{t,Q_T}^{(\frac{l-2r-s}{2})}+\sum_{2r+s=[l]}\langle D_t^rD_x^su \rangle_{x,Q_T}^{(l-[l])},
\end{equation*}
where
\begin{align*}
	&\langle u\rangle_{t,Q_T}^{(\alpha)}=\sup_{(t,x),(t',x)\in\bar{Q}_T}\frac{|u(t,x)-u(t',x)|}{|t-t'|^{\alpha}},\quad 0<\alpha<1,\\
	&\langle u\rangle_{x,Q_T}^{(\alpha)}=\sup_{(t,x),(t,x')\in\bar{Q}_T}\frac{|u(t,x)-u(t,x')|}{|x-x'|^{\alpha}},\quad 0<\alpha<1.
\end{align*}
$C^{\frac{l}{2},l}(\bar{Q}_T)$ is a Banach space \cite{LO}. We also denote by $C^l(\bar{O})$ the space of all time-invariant fields $u\in C^{\frac{l}{2},l}(\bar{Q}_T)$. 

\section{Solubility of fully coupled quasilinear  FBPDEs}\label{MFE}
In this section, we investigate the fully coupled second order quasilinear FBPDEs \eqref{In:3}. Our aim is to prove the existence and uniqueness of solutions of FBPDEs \eqref{In:3} under appropriate assumptions. 

\subsection{Existence}
We first state our assumptions in this subsection. For notational convenience, we use the same constant $L$ for all the conditions below.

\textbf{(A1)} There exists $\gamma\in(0,+\infty)$, such that
\begin{equation*}
	\sum_{i,j=1}^n a_{ij}(t,x,m)\xi_i\xi_j\geq \gamma |\xi|^2, \quad \forall\xi\in\br^n,\quad(t,x,m)\in [0,T]\times\br^n\times \p_1(\br^n).
\end{equation*}
The function $a(t,\cdot,m)\in C^{1+\frac{1}{2}}(\br^n)$ for all $(t,m)\in[0,T]\times\pr_1(\br^n)$, with the function and derivatives being bounded by $L$, Lipschitz continuous in $m\in \p_1(\br^n)$ and $\frac{1}{2}$-Hölder continuous in $t\in[0,T]$, and
\begin{align*}
	|a(t,\cdot,m)|^{(1+\frac{1}{2})}_{\br^n}\le L,\quad (t,m)\in [0,T]\times\pr_1(\br^n).
\end{align*}

\textbf{(A2)} The function $g$ is Lipschitz continuous in $(x,m)\in\br^n\times \p_1(\br^n)$. For all $m\in\pr_1(\br^n)$, $g(\cdot,m)\in C^{2+\frac{1}{2}}(\br^n)$. And there exists $0<\beta<1$ such that 
\begin{equation*}
	|g(\cdot,m)|_{\br^n}^{(1+\beta)}\le L,\quad m\in \pr_1(\br^n).
\end{equation*}

\textbf{(A3)} The initial distribution $m_0\in\pr_2(\br^n)$ is absolutely continuous with respect to the Lebesgue measure, with a density (still denoted by $m_0$) in $C^{\frac{1}{2}}(\br^n)$.

\textbf{(A4)} The function $\hr(t,x,m,\cdot):\br^n\to \br$ is differentiable for all $(t,x,m)\in[0,T]\times\br^n\times\pr_1(\br^n)$. Moreover,
\begin{align*}
	|\hr(t,x,m,p)|\le L(1+|p|^2),\quad (t,x,m,p)\in[0,T]\times\br^n\times\pr_1(\br^n)\times\br^n.
\end{align*}
For $N\in(0,+\infty)$ and $(t,x,m,p),\ (t',x',m',p')\in[0,T]\times\br^n\times\pr_1(\br^n)\times B(0,N)$, where $B(0,N):=\{p\in\br^n:|p|\le N\}$,
\begin{align*}
	&|\frac{\partial \hr}{\partial p}(t,x,m,p)|\le L_N,\\
	&|(\hr,\frac{\partial \hr}{\partial p})(t',x',m',p')-(\hr,\frac{\partial \hr}{\partial p})(t,x,m,p)|\le L_N(|t'-t|^\frac{1}{2}+|x'-x|+d_1(m',m)+|p'-p|),
\end{align*}
for some constant $L_N$ depending on $N$.

We have the following main result of this section:
\begin{theorem}\label{thm:exist}
	Let Assumptions (A1)-(A4) be satisfied. Then, FBPDEs \eqref{In:3} has a solution $(u,m)\in C^{1+\frac{1}{4},2+\frac{1}{2}}([0,T]\times\br^n)\times C^{\frac{1}{2}}([0,T],\p_1(\br^n))$.
\end{theorem}

\begin{remark}
	The boundedness assumptions in $x$ of coefficients hold naturally when FBPDEs \eqref{In:3} are defined on a torus. The proof of Theorem~\ref{thm:exist} relies on application of the Schauder fixed point theorem. We also refer to \cite{VN1} for an alternative application of the Schauder theorem for nonlinear Markov processes. 
\end{remark}

In the rest of this subsection, we give the proof of Theorem~\ref{thm:exist}. The proof is long and is divided into several parts. Following the proof of \cite[Theorem 3.1, p.13]{PC}, we define the set $\dd\subseteq C^0([0,T],\p_1(\br^n))$ as follows:
\begin{equation*}
	\begin{split}
		\mathcal{D}_{C_1}:=\{\mu\in C^0([0,T],\p_1(\br^n)):&\sup_{s\neq t}\frac{d_1(\mu(s),\mu(t))}{|t-s|^{\frac{1}{2}}}\le C_1,\  \sup_{t\in[0,T]}|\mu(t)|^2_{\pr_2(\br^n)}\le C_1\},
	\end{split}
\end{equation*}
where $C_1\in(0,+\infty)$ is waiting to be determined. $\dd$ is a convex closed subset of $C^0([0,T],\p_1(\br^n))$, and is actually compact, due to Arzelà–Ascoli theorem and the fact that the set of probability measures $\mu$ for which $|\mu|^2_{\pr_2(\br^n)}\le C_1$ is compact in $\p_1(\br^n)$ (see \cite[Lemma 5.7]{PC}). We define a map $\Phi:\dd\to\dd$ as follows: for any $\mu\in\dd$, let $u$ be the classical solution to the PDE
\begin{equation}\label{main1}
	\left\{
	\begin{aligned}
		&\frac{\partial u}{\partial t}(t,x)+\sum_{i,j=1}^na_{ij}(t,x,\mu(t,\cdot))\frac{\partial^2u}{\partial x_i\partial x_j}(t,x)\\
		&\quad+\hr(t,x,\mu(t,\cdot),Du(t,x))=0,\quad (t,x)\in[0,T)\times\br^n;\\
		&u(T,x)=g(x,\mu(T,\cdot)),\quad x\in\br^n,
	\end{aligned}
	\right.
\end{equation}
and set $\Phi(\mu)=m$ as the weak solution of the FP equation
\begin{equation}\label{main2}
	\left\{
	\begin{aligned}
		&\frac{\partial m}{\partial t}(t,x)-\sum_{i,j=1}^n\frac{\partial^2}{\partial x_i\partial x_j}(a_{ij}(t,x,\mu(t,\cdot))m(t,x))\\
		&\quad+div[\frac{\partial\hr}{\partial p}(t,x,\mu(t,\cdot),Du(t,x))m(t,x)]=0,\quad (t,x)\in(0,T]\times\br^n;\\
		&m(0,x)=m_0(x),\quad x\in\br^n.
	\end{aligned}
	\right.
\end{equation}
We first show that $\Phi$ is well-defined, that is, for any $\mu\in\dd$, equation \eqref{main1} has a unique solution in $C^{1+\frac{1}{4},2+\frac{1}{2}}([0,T]\times\br^n)$ and equation \eqref{main2} has a unique solution in $\dd$. We then show the continuity of $\Phi$, and Theorem~\ref{thm:exist} as a consequence of the Schauder fixed point theorem.

\subsubsection{Solution of PDE \eqref{main1} in $C^{1+\frac{1}{4},2+\frac{1}{2}}([0,T]\times\br^n)$}
In this subsubsection, we show the solvability of equation \eqref{main1} and give an estimate of the gradient of the solution. We first give the existence and uniqueness result.
\begin{lemma}\label{thm:8.1}
	Let Assumptions (A1), (A2) and (A4) be satisfied. Then, PDE \eqref{main1} has a unique solution $u\in C^{1+\frac{1}{4},2+\frac{1}{2}}([0,T]\times\br^n)$. Moreover, $u$ is bounded by a constant depending only on $(L,T)$.
\end{lemma}

\begin{proof}
	The first equation of PDE \eqref{main1} can be written as the form of equations with principal part in divergence form, namely
	\begin{align*}
		&\frac{\partial u}{\partial t}(t,x)+\sum_{i}^n\frac{\partial}{\partial x_i}[\sum_{j=1}^{n}a_{ij}(t,x,\mu(t,\cdot))\frac{\partial u}{\partial x_j}(t,x)]\\
		&\quad+[\hr(t,x,\mu(t,\cdot),Du(t,x))-\sum_{i,j=1}^{n}\frac{\partial a_{ij}}{\partial x_i}(t,x,\mu(t,\cdot))\frac{\partial u}{\partial x_j}(t,x)]=0,\quad (t,x)\in[0,T)\times\br^n.
	\end{align*}
    From Assumptions (A1), (A2) and (A4), the coefficients of PDE \eqref{main1} have the following properties.
    \begin{enumerate}[(i)]
    	\item The functions $\bar{a}_{ij}(t,x):=a_{ij}(t,x,\mu(t))$ and derivatives $\frac{\partial \bar{a}_{ij}}{\partial x_i}(t,x)$ are Hölder continuous in $(t,x)$ with exponents $(\frac{1}{4},\frac{1}{2})$; and for $N\in(0,+\infty)$, the function $K(t,x,p):=\hr(t,x,\mu(t),p)$ is Hölder continuous in $(t,x,p)$ with exponents $(\frac{1}{4},\frac{1}{2},\frac{1}{2})$ for $(t,x,p)\in[0,T]\times\br^n\times B(0,N)$.
    	
    	Actually, for any $(t',x'),(t,x)\in[0,T]\times\br^n$ and $|p'|,|p|\le N$, from Assumption (A4), we have
    	\begin{align*}
    		|K(t',x',p')-K(t,x,p)|&\le C(N)(|t'-t|^{\frac{1}{2}}+|x'-x|+d_1(\mu(t'),\mu(t))+|p'-p|)\\
    		&\le C(N,C_1) (|t'-t|^{\frac{1}{2}}+|x'-x|+|p'-p|).
    	\end{align*}
    	Here, we use the fact that $d_1(\mu(t'),\mu(t))\le C_1|t'-t|^{\frac{1}{2}}$ since $\mu\in\dd$, and the notation $C(N,C_1)$ stands for a constant depending only on $(N,C_1)$. Then from the boundedness of $K$ when $|p|\le N$, we know that $K$ is Hölder continuous in $(t,x,p)$ with exponents $(\frac{1}{4},\frac{1}{2},\frac{1}{2})$. The proof of the Hölder continuity of functions $(\bar{a}_{ij},\frac{\partial \bar{a}_{ij}}{\partial x_i})(t,x)$ is similar.
    	
    	\item The functions $(\bar{a}_{ij},\frac{\partial \bar{a}_{ij}}{\partial x_i})$ are bounded and there exists a constant $C>0$ such that 
    	\begin{align*}
    		&|K(t,x,p)|\le C (1+|p|)^2,\quad (t,x,p)\in[0,T]\times\br^n\times\br^n.
    	\end{align*}
    
        This is a direct consequence of Assumptions (A1) and (A4), with the constant $C$ depending only on $L$. 
    
    	\item For $N\in(0,+\infty)$, the derivative $\frac{\partial K}{\partial p}(t,x,p)$ is bounded in $[0,T]\times\br^n\times B(0,N)$.
    	
    	Actually, since $\frac{\partial K}{\partial p}(t,x,p)=\frac{\partial \hr}{\partial p}(t,x,\mu(t),p)$, this is a direct consequence of Assumption (A4). 
    \end{enumerate}
	 Then, from the existence and uniqueness theorem of the Cauchy problem of quasi-linear parabolic PDEs \cite[Theorem 8.1, p.495]{LO}, we know that PDE \eqref{main1} has a unique solution $u\in C^{1+\frac{1}{4},2+\frac{1}{2}}([0,T]\times\br^n)$. And in view of \cite[Theorem 2.9, p.23]{LO}, the solution $u$ is bounded by a constant depending only on $(L,T)$.
\end{proof}

We also have the following estimate of the gradient $Du$, which will be used in the following subsections.

\begin{lemma}\label{thm:3.1}
	Let Assumptions (A1), (A2) and (A4) be satisfied and  $u\in C^{1+\frac{1}{4},2+\frac{1}{2}}([0,T]\times\br^n)$ be the solution of PDE \eqref{main1}. Then, there exists $\alpha>0$ depending only on $(n,\gamma,L,\beta)$, such that $|Du|_{[0,T]\times\br^n}^{(\alpha)}$ is bounded by a constant depending only on $(n,\gamma,L,T,\beta)$.
\end{lemma}

\begin{proof}
	Recall \cite[Theorem 3.1, p.437]{LO} on estimates of $Du$ on a bounded domains of $[0,T]\times\br^n$. We choose
	\begin{align*}
		&O_1\subset\subset O_2\subset\subset\dots\subset\subset O_k\subset\subset\dots,\quad \bigcup_{k=1}^{\infty}O_k=\br^n,
	\end{align*}
	and set $Q_T^k:=[0,T]\times O_k$ and $S_T^k:=[0,T]\times\partial O_k$. Then, for any $k\geq 1$, we have
	\begin{align*}
		&u|_{Q_T^k}\in C^{1,2}(Q_T^k),\qquad \sup_{Q_T^k}|u|\le \sup_{[0,T]\times\br^n}|u|\le C(L,T),\\ 
		&|Dg(\cdot,\mu(T))|^{(\beta)}_{O_k}\le |Dg(\cdot,\mu(T))|^{(\beta)}_{\br^n}\le L,
	\end{align*}
     where $C(L,T)$ stands for a constant depending only on $(L,T)$. In view of \cite[Theorem 3.1, p.437]{LO} and our assumptions, there exists a constant $\alpha>0$ depending only on $(n,\gamma,L,\beta)$, such that $|Du|^{\alpha}_{\tilde{Q}^k_T}$ is bounded by a constant depending only on $(n,\gamma,L,T,\beta)$ for any $k\geq 1$, where $\tilde{Q}^k_T\subset Q_T^k$ is separated from the boundary $S^k_T$ by $1$. Since $\bigcup_{k=1}^{\infty}\tilde{Q}_T^k=[0,T]\times\br^n$ and the constants above are independent of $k$, we have $|Du|_{[0,T]\times\br^n}^{(\alpha)}$ is bounded by a constant depending only on $(n,\gamma,L,T,\beta)$. 
\end{proof}

\subsubsection{Solution of PDE \eqref{main2} in $\dd$}\label{belong}
In this subsubsection, we show the existence and uniqueness of the weak solution of equation \eqref{main2}, and show that the weak solution $m$ of FP equation \eqref{main2} belongs to the set $\dd$. Let $(\Omega,\f,\mathbb{P})$ denote a complete filtered probability space augmented by all the $\mathbb{P}$-null sets on which an $n$-dimensional Brownian motion $\{W_t,\ 0\le t\le T\}$ is defined. Consider the following SDE:
\begin{equation}\label{sde_u_mu}
	\left\{
	\begin{aligned}
		&dX_t=\frac{\partial\hr}{\partial p}(t,X_t,\mu(t,\cdot),Du(t,X_t))dt+\sigma(t,X_t,\mu(t,\cdot))dW_t,\quad t\in(0,T];\\
		&X_0=\xi_0\sim m_0.
	\end{aligned}
	\right.
\end{equation}
From Assumptions (A1), (A3) and (A4), the fact that $Du\in C^{\frac{3}{4},\frac{3}{2}}([0,T]\times\br^n)$ and standard arguments of SDE, we know that SDE \eqref{sde_u_mu} has a unique solution $X\in\mathcal{S}^2_{\f}(0,T)$. We set ${m}(t):=\lr(X_t)$ for $t\in[0,T]$. The following definition is borrowed from \cite[Definition 3.2, p.11]{PC}.
\begin{definition}[Weak solution of \eqref{main2}]
	We say that m is a weak solution of PDE \eqref{main2}, if $m\in L^1([0,T],\p_1(\br^n))$ such that for any test function $\varphi\in C_c^{\infty}([0,T)\times\br^n)$, we have
	\begin{equation*}
		\begin{split}
			\int_{\br^n}\varphi(0,x)m_0(dx)+&\int_0^T\int_{\br^n}\Big[\frac{\partial \varphi}{\partial t}(t,x)+\sum_{i,j=1}^na_{ij}(t,x,\mu(t,\cdot))\frac{\partial^2 \varphi}{\partial x_i \partial x_j}(t,x)\\
			&+\sum_{i=1}^n\frac{\partial \hr}{\partial p_i}(t,x,\mu(t,\cdot),Du(t,x))\frac{\partial \varphi}{\partial x_i}(t,x)\Big]m(t,dx)dt=0.
		\end{split}
	\end{equation*}
\end{definition}

The next lemma shows that ${m}$ is a weak solution of PDE \eqref{main2}.

\begin{lemma}\label{lem:weak}
	Let Assumptions (A1)-(A4) be satisfied. Then, ${m}$ is a weak solution of PDE \eqref{main2}.
\end{lemma}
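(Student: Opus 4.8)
The plan is to apply It\^o's formula to $\varphi(t,X_t)$ for an arbitrary test function $\varphi\in C_c^\infty([0,T]\times\br^n)$, where $X$ is the unique solution of the SDE \eqref{sde_u_mu}, and then take expectations to recover exactly the weak formulation in the Definition above with $m$ replaced by $\tilde m(t)=\lr(X_t)$. First I would recall that by Theorem~\ref{thm:step1} the solution $u$ of \eqref{main1} satisfies $Du\in H^{\frac34,\frac32}([0,T]\times\br^n)$, so the drift $b(t,x,\mu(t,\cdot),\phi(t,x,Du(t,x)))$ is bounded and continuous, and $\sigma(t,x,\mu(t,\cdot))$ is bounded and continuous by Assumptions (B2)--(B4); together with (B6) this gives, via standard SDE theory, the unique strong solution $X\in\SCal^2_\f(0,T)$ already asserted in the text. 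In particular $\sup_{t}\e|X_t|^2<\infty$, so $\tilde m\in C^0([0,T],\pr_1(\br^n))\subset L^1([0,T],\pr_1(\br^n))$, which is the integrability required of a weak solution.

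Next I would write It\^o's formula for the semimartingale $X$: for $t\in[0,T]$,
\begin{equation*}
\begin{split}
\varphi(t,X_t)=\varphi(0,X_0)&+\int_0^t\Big[\partial_s\varphi(s,X_s)+\sum_{i=1}^n b_i(s,X_s,\mu(s,\cdot),\phi(s,X_s,Du(s,X_s)))\partial_{x_i}\varphi(s,X_s)\\
&\quad+\sum_{i,j=1}^n a_{ij}(s,X_s,\mu(s,\cdot))\partial_{x_ix_j}\varphi(s,X_s)\Big]ds+\int_0^t D\varphi(s,X_s)^\T\sigma(s,X_s,\mu(s,\cdot))\,dW_s,
\end{split}
\end{equation*}
using that $a_{ij}=\frac12(\sigma\sigma^\T)_{ij}$. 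Since $\varphi$ has compact support and $\sigma$ is bounded, the stochastic integral is a true martingale, so taking expectations at $t=T$ and using that $\varphi(T,\cdot)\equiv 0$ (we may assume $\varphi$ is supported in $[0,T)\times\br^n$, or argue by approximation) gives
\begin{equation*}
0=\e[\varphi(0,X_0)]+\e\int_0^T\Big[\partial_s\varphi(s,X_s)+\sum_{i,j}a_{ij}(s,X_s,\mu(s,\cdot))\partial_{x_ix_j}\varphi(s,X_s)+\sum_i b_i(s,X_s,\mu(s,\cdot),\phi(s,X_s,Du(s,X_s)))\partial_{x_i}\varphi(s,X_s)\Big]ds.
\end{equation*}
Rewriting $\e[\psi(s,X_s)]=\int_{\br^n}\psi(s,x)\,\tilde m(s)(dx)$ for each bounded continuous $\psi$, and $\e[\varphi(0,X_0)]=\int_{\br^n}\varphi(0,x)m_0(dx)$ since $X_0\sim m_0$, Fubini (justified by boundedness of all integrands on the compact $x$-support) turns this into precisely the identity in the Definition with $m=\tilde m$. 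Hence $\tilde m$ is a weak solution of \eqref{main2}.

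I do not expect a serious obstacle here; the only points needing care are (i) the martingale property of the stochastic integral, which follows from compact support of $\varphi$ plus boundedness of $\sigma$; (ii) handling the terminal time, i.e. that a generic $\varphi\in C_c^\infty([0,T]\times\br^n)$ need not vanish at $t=T$ — this is dealt with either by the convention that test functions vanish near $t=T$ (as is implicit in \cite{PC}) or by an elementary cutoff/limit argument; and (iii) the regularity of the composed coefficients $x\mapsto b_i(s,x,\mu(s,\cdot),\phi(s,x,Du(s,x)))$, which is continuous and bounded by Assumptions (B3), (B7) and $Du\in H^{\frac34,\frac32}$, so It\^o's formula in its standard form applies. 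Combined with the uniqueness of the weak solution from Proposition~\ref{pp:weak} (applicable since all coefficients of \eqref{main2} lie in $H^{\frac14,\frac12}([0,T]\times\br^n)$ by the previous subsection and $m_0\in H^{1/2}(\br^n)$ by (B6)), this yields $m(t)=\tilde m(t)=\lr(X_t)$ for all $t\in[0,T]$, which is the statement — and the identification we will use in the next subsection to show $m\in\dd$.
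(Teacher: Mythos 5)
Your proposal is correct and follows essentially the same route as the paper: apply It\^o's formula to $\varphi(t,X_t)$ for a test function $\varphi\in C_c^{\infty}([0,T]\times\br^n)$, take expectations so the stochastic integral drops out, and identify the resulting identity with the weak formulation via $\tilde m(t)=\lr(X_t)$. Your additional remarks on the martingale property, the behaviour of $\varphi$ at $t=T$, and the regularity of the composed coefficients only make explicit points the paper leaves implicit.
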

\begin{proof}
	For a test function $\varphi\in C_c^{\infty}([0,T)\times\br^n)$, by applying Itô's formula, we have
	\begin{align*}
		\varphi(t,X_t)=&\varphi(0,\xi_0)+\int_0^t\Big[ \frac{\partial\varphi}{\partial s}(s,X_s)+\frac{1}{2}\tr[\sigma\sigma^T(s,X_s,\mu(s,\cdot))D^2\varphi(s,X_s)]\\
		&+\langle D\varphi(s,X_s),\frac{\partial \hr}{\partial p}(s,X_s,\mu(s,\cdot),Du(s,X_s)) \rangle \Big]ds\\
		&+\int_0^t\langle D\varphi(s,X_s),\sigma(s,X_s,\mu(s,\cdot))dW_s \rangle,\quad t\in[0,T].
	\end{align*}
	Taking the expectation on both sides and noting that $\varphi\in C_c^{\infty}([0,T)\times\br^n)$, from the definition of ${m}$, we know that ${m}$ is a weak solution of PDE \eqref{main2}.
\end{proof}

The following uniqueness result shows that $m$ is the unique weak solution of PDE \eqref{main2}. 
\begin{lemma}\label{pp:weak}
	Let Assumptions (A1)-(A4) be satisfied. Then, PDE \eqref{main2} has at most one weak solution. 
\end{lemma}

\begin{proof}
	We first show that the coefficients of PDE \eqref{main2} belong to the class $C^{\frac{1}{4},\frac{1}{2}}([0,T]\times\br^n)$. Actually, From Assumption (A1) and the fact that $\mu\in\dd$, we know that for $1\le i,j\le n$,
	\begin{equation*}
		a_{ij}(t,x,\mu(t)):=\bar{a}_{ij}(t,x)\in C^{\frac{1}{4},\frac{1}{2}}([0,T]\times\br^n).
	\end{equation*}
	From Lemma~\ref{thm:3.1}, we know that
	\begin{equation}\label{1.5}
		Du\in C^{\frac{3}{4},\frac{3}{2}}([0,T]\times\br^n).
	\end{equation}
	From \eqref{1.5}, Assumption (A4) and the fact that $\mu\in\dd$, we know that for $1\le i\le n$,
	\begin{align*}
		\frac{\partial \hr}{\partial p_i}(t,x,\mu(t),Du(t,x)):=h_i(t,x)\in C^{\frac{1}{4},\frac{1}{2}}([0,T]\times\br^n).
	\end{align*}
    Then, Lemma~\ref{pp:weak} is a direct consequence of \cite[p.12]{PC}, Assumption (A3) and the fact that the coefficients of PDE \eqref{main2} belong to class $C^{\frac{1}{4},\frac{1}{2}}([0,T]\times\br^n)$.
\end{proof}

Now we can prove that the solution of PDE \eqref{main2} belongs to $\dd$.

\begin{lemma}\label{thm:step3}
	Let Assumptions (A1)-(A4) be satisfied. There is $C_1\in(0,+\infty)$ depending only on $(n,\gamma,L,\beta,T,m_0)$, such that the solution $m$ of PDE \eqref{main2} belongs to $\dd$.
\end{lemma}

\begin{proof}
	For $0\le s<t\le T$, 
	\begin{equation}\label{thm_3_1}
	\begin{split}
		d_1(m(s),m(t))&\le \e|X_s-X_t|\\
		&=\e|\int_s^t\frac{\partial\hr}{\partial p}(\tau,X_{\tau},\mu(\tau,\cdot),Du(\tau,X_{\tau}))d\tau+\int_s^t\sigma(\tau,X_{\tau},\mu(\tau,\cdot))dW_\tau|\\
		&\le |t-s|\sup_{(t,x)\in[0,T]\times\br^n}|\frac{\partial \hr}{\partial p}(t,x,\mu(t,\cdot),Du(t,x))|+L|t-s|^{\frac{1}{2}}.
    \end{split}	
    \end{equation}
	From Lemma~\ref{thm:3.1}, we have 
	\begin{equation*}
		\sup_{(t,x)\in[0,T]\times\br^n}|Du(t,x)|\le C(n,\gamma,L,T,\beta),
	\end{equation*}
    where $C(n,\gamma,L,T,\beta)$ is a constant depending only on $(n,\gamma,L,T,\beta)$. So from Assumption (A4), 
    \begin{equation}\label{thm_3_2}
    	\sup_{(t,x)\in[0,T]\times\br^n}|\frac{\partial \hr}{\partial p}(t,x,\mu(t,\cdot),Du(t,x))|\le C(n,\gamma,L,T,\beta).
    \end{equation}
    Therefore, from \eqref{thm_3_1} and \eqref{thm_3_2}, we have
	\begin{align*}
		\sup_{s\neq t}\frac{d_1(m(s),m(t))}{|t-s|^{\frac{1}{2}}}\le C(n,\gamma,L,T,\beta).
	\end{align*}
	Similarly, for any $t\in[0,T]$,
	\begin{equation}\label{thm_3_3}
	\begin{split}
		\int_{\br^n}|x|^2m(t,dx)&=\e|X_t|^2\\
		&\le C(L,T) \e[|\xi_0|^2+1+\sup_{(t,x)\in[0,T]\times\br^n}|\frac{\partial \hr}{\partial p}(t,x,\mu(t,\cdot),Du(t,x))|^2]\\
		&\le C(n,\gamma,L,T,\beta,m_0).
	\end{split}
	\end{equation}
	From \eqref{thm_3_2} and \eqref{thm_3_3}, we have $m\in\dd$ with $C_1=C(n,\gamma,L,\beta,T,m_0)$ independent of $\mu$. The proof is complete.
\end{proof}

\subsubsection{Continuity of $\Phi$}\label{conti_phi}
Now it is clear that the mapping $\Phi:\dd\to\dd$ is well-defined. In this subsubsection, we prove that it is continuous to complete the proof of Theorem~\ref{thm:exist}. 

\begin{lemma}\label{thm:conti}
	Let Assumptions (A1)-(A4) be satisfied. Then, the mapping $\Phi:\dd\to\dd$ is continuous.
\end{lemma}

\begin{proof}
	For notational convenience, we set
	\begin{equation*}
		\rho(\mu,\mu'):=d_{C^0([0,T],\p_1(\br^n))}(\mu,\mu')=\sup_{0\le t\le T}d_1(\mu(t),\mu'(t)),\quad \mu,\mu'\in\dd.
	\end{equation*}
	Let $\{\mu_k,\ k\geq 1\}\subset\dd$ converge to some $\mu\in\dd$ with respect to the metric $\rho$. Let $(u_k,m_k)$ and $(u,m)$ be the solutions of PDEs \eqref{main1}-\eqref{main2} corresponding to $\mu_k$ and $\mu$, respectively. From Assumptions (A1), (A2) and (A4), we know that the coefficients in \eqref{main1} corresponding to $\mu_k$ uniformly converge to the coefficients corresponding to $\mu$. Then we gets the local uniform convergence of $\{u_k,\ k\geq 1\}$ to $u$ by standard arguments. From Lemma~\ref{thm:3.1}, we know that the gradients $\{Du_k,\ k\geq 1\}$ are uniformly bounded and uniformly Hölder continuous and therefore locally uniformly converges to $Du$. For any converging subsequence $\{m_{k_l},\ l\geq 1\}$ of the relatively compact sequence $\{m_k,\ k\geq 1\}$ (since $\dd$ is compact), we assume that $\{m_{k_l},\ l\geq 1\}$ converge to some $\hat{m}\in\dd$. For any $\varphi\in C_c^{\infty}([0,T)\times\br^n)$, since $m_{k_l}$ is a weak solution of \eqref{main2} corresponding to $(\mu_{k_l},u_{k_l})$, we have
	\begin{equation}\label{4:1}
		\begin{split}
			&\int_{\br^n}\varphi(0,x)m_0(dx)+\int_0^T\int_{\br^n}\Big[\frac{\partial\varphi}{\partial t}(t,x)+\sum_{i,j=1}^na_{ij}(t,x,\mu_{k_l}(t,\cdot))\frac{\partial^2\varphi}{\partial x_i\partial x_j}(t,x)\\
			&+\sum_{i=1}^n\frac{\partial \hr}{\partial p_i}(t,x,\mu_{k_l}(t,\cdot),Du_{k_l}(t,x))\frac{\partial\varphi}{\partial x_i}(t,x)\Big]m_{k_l}(t,dx)dt=0.
		\end{split}
	\end{equation}
	From Assumption (A1) and the facts that $\varphi\in C_c^{\infty}([0,T)\times\br^n)$, we have
	\begin{equation}\label{4:1.1^1}
		\begin{split}
			I_{k_l}^1:&=\Big|\int_0^T\int_{\br^n}\sum_{i,j=1}^n[a_{ij}(t,x,\mu_{k_l}(t,\cdot))-a_{ij}(t,x,\mu(t,\cdot))]\frac{\partial^2\varphi}{\partial x_i\partial x_j}(t,x)m_{k_l}(t,dx)dt\Big|\\
			&\le C(L)\rho(\mu_{k_l},\mu)\int_0^T\int_{\br^n} \Big(\sum_{i,j=1}^n|\frac{\partial^2\varphi}{\partial x_i\partial x_j}(t,x)|\Big)m_{k_l}(t,dx)dt.
		\end{split}
	\end{equation}
	From Lemma~\ref{thm:3.1}, we know that there is a constant $C(n,\gamma,L,T,\beta)$ depending only on $(n,\gamma,L,T,\beta)$, such that 
	\begin{equation*}
		\sup_{(t,x)\in[0,T]\times\br^n}|Du(t,x)|\le C(n,\gamma,L,T,\beta),\quad \sup_{(t,x)\in[0,T]\times\br^n}|Du_{k_l}(t,x)|\le C(n,\gamma,L,T,\beta),\quad l\geq 1.
	\end{equation*}
	Therefore, from Assumption (A4) and the fact that $\varphi\in C_c^{\infty}([0,T)\times\br^n)$,
	\begin{equation}\label{4:1.1^2}
		\begin{split}
			I^2_{k_l}:&=\Big|\int_0^T\int_{\br^n}\sum_{i=1}^n[\frac{\partial \hr}{\partial p_i}(t,x,\mu_{k_l}(t,\cdot),Du_{k_l}(t,x))\\
			&\qquad-\frac{\partial \hr}{\partial p_i}(t,x,\mu(t,\cdot),Du(t,x))]\frac{\partial\varphi}{\partial x_i}(t,x)m_{k_l}(t,dx)dt\Big|\\
			&\le C(n,\gamma,L,T,\beta)\int_0^T\int_{\br^n} \Big(\sum_{i=i}^{n}|\frac{\partial\varphi}{\partial x_i}(t,x)|\Big)m_{k_l}(t,dx)dt\\
			&\qquad \cdot [\rho(\mu_{k_l},\mu)+\sup_{(t,x)\in \overline{supp(\varphi)}}|Du_{n_k}(t,x)-Du(t,x)|].
		\end{split}
	\end{equation}
	From Kantorovich-Rubinstein Theorem \cite[Theorem 5.5, p.36]{PC} and the fact that $\{m_{k_l},\ l\geq 1\}$ converge to $\hat{m}$ with respect to $\rho$,  we have for $1\le i,j\le n$,
	\begin{equation}\label{4:1.2}
		\begin{aligned}
			&\lim_{l\to+\infty}\int_0^T\int_{\br^n} |\frac{\partial^2\varphi}{\partial x_i\partial x_j}(t,x)|m_{k_l}(t,dx)dt=\int_0^T\int_{\br^n} |\frac{\partial^2\varphi}{\partial x_i\partial x_j}(t,x)|\hat{m}(t,dx)dt,\\
			&\lim_{l\to+\infty}\int_0^T\int_{\br^n}|\frac{\partial\varphi}{\partial x_i}(t,x))| m_{k_l}(t,dx)dt= \int_0^T\int_{\br^n}|\frac{\partial\varphi}{\partial x_i}(t,x))| \hat{m}(t,dx)dt.
		\end{aligned}
	\end{equation}
	Since $\{\mu_k,\ k\geq 1\}$ converge to $\mu$ with respect to $\rho$ and $\{Du_k,\ k\geq 1\}$ locally uniformly converges to $Du$, we have
	\begin{equation}\label{4:1.3}
		\begin{aligned}
			&\lim_{l\to+\infty}\rho(\mu_{k_l},\mu)= 0;\\
			&\lim_{l\to+\infty}\sup_{(t,x)\in \overline{supp(\varphi)}}|Du_{k_l}(t,x)-Du(t,x)|=0.
		\end{aligned}
	\end{equation}
	Plugging \eqref{4:1.2}-\eqref{4:1.3} into \eqref{4:1.1^1}-\eqref{4:1.1^2}, we have
	\begin{equation}\label{4:2}
		\begin{split}
			\lim_{l\to+\infty}I^1_{k_l}=\lim_{l\to+\infty}I^2_{k_l}= 0.
		\end{split}
	\end{equation}
	Plugging \eqref{4:2} into \eqref{4:1}, we have
	\begin{equation}\label{4:3}
		\begin{split}
			&\int_{\br^n}\varphi(0,x)m_0(dx)+\lim_{l\to+\infty}\int_0^T\int_{\br^n}\Big[\frac{\partial\varphi}{\partial t}(t,x)+\sum_{i,j=1}^na_{ij}(t,x,\mu(t,\cdot))\frac{\partial^2\varphi}{\partial x_i\partial x_j}(t,x)\\
			&+\sum_{i=1}^n\frac{\partial \hr}{\partial p_i}(t,x,\mu(t,\cdot),Du(t,x))\frac{\partial\varphi}{\partial x_i}(t,x)\Big]m_{k_l}(t,dx)dt= 0.
		\end{split}
	\end{equation}
	Again from Kantorovich-Rubinstein Theorem, we have 
	\begin{equation}\label{4:4}
		\begin{split}
			&\lim_{l\to+\infty}\int_0^T\int_{\br^n}\Big[\frac{\partial\varphi}{\partial t}(t,x)+\sum_{i,j=1}^na_{ij}(t,x,\mu(t,\cdot))\frac{\partial^2\varphi}{\partial x_i\partial x_j}(t,x)\\
			&\qquad+\sum_{i=1}^n\frac{\partial\hr}{\partial p_i}(t,x,\mu(t,\cdot),Du(t,x))\frac{\partial\varphi}{\partial x_i}(t,x)\Big]m_{k_l}(t,dx)dt\\
			&= \int_0^T\int_{\br^n}\Big[\frac{\partial\varphi}{\partial t}(t,x)+\sum_{i,j=1}^na_{ij}(t,x,\mu(t,\cdot))\frac{\partial^2\varphi}{\partial x_i\partial x_j}(t,x)\\
			&\qquad+\sum_{i=1}^n\frac{\partial\hr}{\partial p_i}(t,x,\mu(t,\cdot),Du(t,x))\frac{\partial\varphi}{\partial x_i}(t,x)\Big]\hat{m}(t,dx)dt.
		\end{split}
	\end{equation}
	Plugging \eqref{4:4} into \eqref{4:3}, we have 
	\begin{equation*}
		\begin{split}
			&\int_{\br^n}\varphi(0,x)m_0(dx)+\int_0^T\int_{\br^n}\Big[\frac{\partial\varphi}{\partial t}(t,x)+\sum_{i,j=1}^na_{ij}(t,x,\mu(t))\frac{\partial^2\varphi}{\partial x_i\partial x_j}(t,x)\\
			&+\sum_{i=1}^n\frac{\partial\hr}{\partial p_i}(t,x,\mu(t,\cdot),Du(t,x))\frac{\partial\varphi}{\partial x_i}(t,x)\Big]\hat{m}(t,dx)dt= 0,
		\end{split}
	\end{equation*}
	which means that $\hat{m}$ is a weak solution to \eqref{main2} corresponding to $(\mu,u)$. Then, from Lemma~\ref{pp:weak}, we know that $m=\hat{m}$. Up to now we can see that, any converging subsequence $\{m_{k_l},\ l\geq 1\}$ of the relatively compact sequence $\{m_k,\ k\geq 1\}$ converge to $m$. So we know that $\{m_k,\ k\geq 1\}$ converge to $m$. Thus, $\Phi$ is continuous.
\end{proof}

{\it Proof of Theorem~\ref{thm:exist}:} We conclude by Lemmas~\ref{thm:8.1}, \ref{thm:step3} and \ref{thm:conti} and Schauder fixed point theorem that the continuous map $\Phi$ has a fixed point in $\dd$. This fixed point is a solution of FBPDEs \eqref{In:3}. \endproof

\subsection{Uniqueness}
In this subsection, we give the uniqueness result of FBPDEs \eqref{In:3} when the coefficient $\sigma$ is independent of $m$ and the function $\hr$ is of the form
\begin{align*}
	\hr(t,x,m,p)=\hr^0(t,x,m)+\hr^1(t,x,p),\quad
	(t,x,m,p)\in [0,T]\times\br^n\times\pr_1(\br^n)\times\br^{n},
\end{align*}
with the function $\hr^1$ satisfying for any $(t,x,p_1,p_2)\in[0,T]\times\br^n\times\br^n\times\br^n$,
\begin{align}\label{uni_h1}
	\hr^1(t,x,p_2)-\hr^1(t,x,p_1)\le \langle \frac{\partial\hr^1}{\partial p}(t,x,p_1),p_2-p_1 \rangle.
\end{align}
Moreover, let us assume that, besides Assumptions (A1)-(A4), the following monotonicity conditions hold:
\begin{align*}
	&\int_{\br^n}[\hr^0(t,x,m_2)-\hr^0(t,x,m_1)](m_2-m_1)(dx)>0, \quad t\in[0,T],\  m_1,m_2\in\pr_1(\br^n),\  m_1\neq m_2,\\
	&\int_{\br^n}[g(x,m_2)-g(x,m_1)](m_2-m_1)(dx)\geq0, \quad m_1,m_2\in\pr_1(\br^n).
\end{align*}
The above concavity and monotonicity conditions are also used in Lions \cite{PC} and Porretta \cite{POA} for proving the uniqueness result of MFEs \eqref{In:0}. Now we state our uniqueness result for our FBPDEs \eqref{In:3}.

\begin{theorem}\label{thm:uni}
	Under the above conditions, FBPDEs \eqref{In:3} has no more than one solution in $C^{1+\frac{1}{4},2+\frac{1}{2}}([0,T]\times\br^n)\times C^{\frac{1}{2}}([0,T],\p_1(\br^n))$.
\end{theorem}

\begin{proof}
	Let $(u^1,m^1)$ and $(u^2,m^2)$ be two solutions of FBPDEs \eqref{In:3} in $C^{1+\frac{1}{4},2+\frac{1}{2}}([0,T]\times\br^n)\times C^{\frac{1}{2}}([0,T],\p_1(\br^n))$. We set $(\Delta u,\Delta m):=(u^2-u^1,m^2-m^1)$, then
	\begin{equation}\label{uni_1}
		\left\{
		\begin{aligned}
			&\frac{\partial \Delta u}{\partial t}+\sum_{i,j=1}^na_{ij}(t,x)\frac{\partial^2\Delta u}{\partial x_i\partial x_j}+[H^1(t,x,Du^2)-H^1(t,x,Du^1)]\\
			&\qquad+[\hr^0(t,x,m^2(t,\cdot))-\hr^0(t,x,m^1(t,\cdot))]=0,\quad t\in[0,T),\\
			&\frac{\partial \Delta m}{\partial t}-\sum_{i,j=1}^n\frac{\partial^2}{\partial x_i\partial x_j}[a_{ij}(t,x)\Delta m]\\
			&\qquad+\text{div}[\frac{\partial\hr^1}{\partial p}(t,x,Du^2)m^2-\frac{\partial\hr^1}{\partial p}(t,x,Du^1)m^1]=0,\quad t\in(0,T],\\
			&\Delta m(0,x)=0, \quad \Delta u(T,x)=g(x,m^2(T,\cdot))-g(x,m^1(T,\cdot)).
		\end{aligned}
		\right.
	\end{equation}
    Since
    \begin{align*}
    	\int_{\br^n} \Delta u(T,x)\Delta m(T,dx)=\int_0^T\frac{\partial }{\partial t} \int_{\br^n} \Delta u(t,x)\Delta m(t,dx) dt,
    \end{align*}
    from PDEs \eqref{uni_1}, we have
	\begin{align*}
		&\int [g(x,m^2(T,\cdot))-g(x,m^1(T,\cdot))] \Delta m(T,dx)\\
		&\qquad+\int_0^T\int [\hr^0(t,x,m^2(t,\cdot))-\hr^0(t,x,m^1(t,\cdot))] \Delta m(t,dx) dt\\
		&=\int_0^T\int [\hr^1(t,x,Du^1(t,x))-\hr^1(t,x,Du^2(t,x))]\Delta m(t,dx)\\
		&\qquad +\langle D\Delta u(t,x),\frac{\partial\hr^1}{\partial p}(t,x,Du^2(t,x))m^2(t,dx)-\frac{\partial\hr^1}{\partial p}(t,x,Du^1(t,x))m^1(t,dx) \rangle dt.
	\end{align*}
	From \eqref{uni_h1}, we have 
	\begin{align*}
		&[\hr^1(t,x,Du^1)-\hr^1(t,x,Du^2)]\Delta m+\langle D\Delta u,\frac{\partial\hr^1}{\partial p}(t,x,Du^2)m^2-\frac{\partial\hr^1}{\partial p}(t,x,Du^1)m^1 \rangle\\
		&=m_2[\hr^1(t,x,Du^1)-\hr^1(t,x,Du^2)-\langle\frac{\partial\hr^1}{\partial p}(t,x,Du^2),Du^1-Du^2\rangle]\\
		&\qquad +m_1[\hr^1(t,x,Du^2)-\hr^1(t,x,Du^1)-\langle\frac{\partial\hr^1}{\partial p}(t,x,Du^1),Du^2-Du^1\rangle]\\
		&\le 0.
	\end{align*}
	So from the monotonicity conditions of functions $\hr^0$ and $g$, we have $\Delta m=0$ and, therefore, $\Delta u=0$.
\end{proof}

\begin{remark}
	We refer to a weak monotonicity condition
	\begin{align*}
		\e[(\frac{\partial g}{\partial x}(\xi_1,\lr(\xi_1))-\frac{\partial g}{\partial x}(\xi_2,\lr(\xi_2)))(\xi_1-\xi_2)]\geq 0,
	\end{align*} 
    originating in Ahuja \cite{Ahuja} and was later used in Ahuja et al. \cite{AhujaRY} and Huang and Tang \cite{OUR} to give the solvability of conditional distribution dependent forward-backward stochastic differential equations (FBSDEs) associated with MFGs with common noises under differentiability assumptions in $x$ of the Hamiltonian $H$. 
\end{remark}

\subsection{Regularity}
In this subsection, we improve the regularity of our weak solution $m$, and give the existence result of classical solutions of FBPDE \eqref{In:3}. The following Assumptions (A1'), (A3') and (A4') are the regularity-enhanced version of Assumptions (A1), (A3) and (A4).

\textbf{(A1')} The function $a$ satisfies (A1). Moreover, $a(t,\cdot,m)\in C^{2+\frac{1}{2}}(\br^n)$ for all $(t,m)\in[0,T]\times\pr_1(\br^n)$, with the derivatives being bounded by $L$, Lipschitz continuous in $m\in \p_1(\br^n)$ and $\frac{1}{2}$-Hölder continuous in $t\in[0,T]$, and
\begin{align*}
	|a(t,\cdot,m)|^{(2+\frac{1}{2})}_{\br^n}\le L,\quad (t,m)\in [0,T]\times\pr_1(\br^n).
\end{align*}

\textbf{(A3')} The initial distribution $m_0$ satisfies (A3). Moreover, $m_0\in C^{2+\frac{1}{2}}(\br^n)$.

\textbf{(A4')} The function $\hr$ satisfies (A4). Moreover, $\frac{\partial\hr}{\partial p}(t,\cdot,m,\cdot):\br^n\times\br^n\to \br^n$ is differentiable for all $(t,m)\in[0,T]\times\pr_1(\br^n)$, and for $N\in(0,+\infty)$ and $(t,x,m,p),\ (t',x',m',p')\in[0,T]\times\br^n\times\pr_1(\br^n)\times B(0,N)$, where $B(0,N):=\{p\in\br^n:|p|\le N\}$,
\begin{align*}
	&|(\frac{\partial^2\hr}{\partial x\partial p},\frac{\partial^2 \hr}{\partial p^2})(t,x,m,p)|\le L_N,\\
	&|(\frac{\partial^2\hr}{\partial x\partial p},\frac{\partial^2 \hr}{\partial p^2})(t',x',m',p')-(\frac{\partial^2\hr}{\partial x\partial p},\frac{\partial^2 \hr}{\partial p^2})(t,x,m,p)|\\
	&\qquad \le L_N(|t'-t|^\frac{1}{2}+|x'-x|+d_1(m',m)+|p'-p|),
\end{align*}
for some constant $L_N$ depending on $N$.

We have the following existence result of classical solutions of FBPDE \eqref{In:3}:
\begin{theorem}\label{thm:improve}
	Let Assumptions (A1'), (A2), (A3') and (A4') be satisfied. Then, FBPDEs \eqref{In:3} has a classical solution $(u,m)\in C^{1+\frac{1}{4},2+\frac{1}{2}}([0,T]\times\br^n)\times C^{1+\frac{1}{4},2+\frac{1}{2}}([0,T]\times\br^n)$.
\end{theorem}

\begin{remark}
	Our Theorem~\ref{thm:improve} includes as a special case Lions' existence result of MFEs \cite[Theorem 3.1, p.10]{PC} with $\sigma=\sqrt{2}$ and $\hr(x,m,p)=\frac{1}{2}|p|^2+F(x,m)$. 
\end{remark}

\begin{proof}
	From Theorem~\ref{thm:exist}, we know that FBPDEs \eqref{In:3} has a solution $(u,m)\in C^{1+\frac{1}{4},2+\frac{1}{2}}([0,T]\times\br^n)\times \dd$ for some $C_1\in(0,+\infty)$. We only need to show that $m\in C^{1+\frac{1}{4},2+\frac{1}{2}}([0,T]\times\br^n)$. Actually, in view of Assumptions (A1') and (A4'), we know that $m$ satisfies the following PDE:
	\begin{equation}\label{main2'}
		\left\{
		\begin{aligned}
			&\frac{\partial m}{\partial t}(t,x)-\sum_{i,j=1}^n a_{ij}(t,x,m(t,\cdot))\frac{\partial^2m}{\partial x_i\partial x_j}(t,x)\\
			&\quad+\sum_{i=1}^n[\frac{\partial\hr}{\partial p_i}(t,x,m(t,\cdot),Du(t,x))-\sum_{j=1}^n\frac{\partial a_{ij}}{\partial x_j}(t,x,m(t,\cdot))]\frac{\partial m}{\partial x_i}(t,x)\\
			&\quad+\sum_{i=1}^n\big[-\sum_{j=1}^n\frac{\partial^2 a_{ij}}{\partial x_i\partial x_j}(t,x,m(t,\cdot))+\frac{\partial^2\hr}{\partial x_i\partial p_i}(t,x,m(t,\cdot),Du(t,x))\\
			&\quad +\sum_{j=1}^{n}\frac{\partial^2\hr}{\partial p_j\partial p_i}(t,x,m(t,\cdot),Du(t,x))\frac{\partial^2 u}{\partial x_j\partial x_i}(t,x)\big]m(t,x)=0,\quad (t,x)\in(0,T]\times\br^n;\\
			&m(0,x)=m_0(x),\quad x\in\br^n.
		\end{aligned}
		\right.
	\end{equation}
	We first show that the coefficients of PDE \eqref{main2'} belong to the class $C^{\frac{1}{4},\frac{1}{2}}([0,T]\times\br^n)$. From Assumption (A1') and the fact that $m\in\dd$, we know that for $1\le i,j\le n$,
	\begin{equation*}
		(a_{ij},\frac{\partial a_{ij}}{\partial x_j},\frac{\partial^2 a_{ij}}{\partial x_i\partial x_j})(t,x,m(t)):=\bar{a}_{ij}(t,x)\in C^{\frac{1}{4},\frac{1}{2}}([0,T]\times\br^n).
	\end{equation*}
	From the fact that $u\in C^{1+\frac{1}{4},2+\frac{1}{2}}([0,T]\times\br^n)$, we know that
	\begin{equation}\label{1.5'}
		Du\in C^{\frac{3}{4},\frac{3}{2}}([0,T]\times\br^n),\quad D^2u\in C^{\frac{1}{4},\frac{1}{2}}([0,T]\times\br^n).
	\end{equation}
	From \eqref{1.5'}, Assumption (A4') and the fact that $\mu\in\dd$, we know that for $1\le i,j\le n$,
	\begin{align*}
		&(\frac{\partial \hr}{\partial p_i},\frac{\partial^2 \hr}{\partial x_i\partial p_i},\frac{\partial^2 \hr}{\partial p_j\partial p_i})(t,x,\mu(t),Du(t,x)):=h_{ij}(t,x)\in C^{\frac{1}{4},\frac{1}{2}}([0,T]\times\br^n),\\
		&\frac{\partial^2\hr}{\partial p_j\partial p_i}(t,x,\mu(t),Du(t,x))\frac{\partial^2 u}{\partial x_j \partial x_i}(t,x):=k_{ij}(t,x)\in C^{\frac{1}{4},\frac{1}{2}}([0,T]\times\br^n).
	\end{align*}
	Therefore, from \cite[Theorem 5.1, p.320]{LO} and our Assumptions (A1'), (A2), (A3') and (A4'), we have $m\in C^{1+\frac{1}{4},2+\frac{1}{2}}([0,T]\times\br^n)$. The proof is complete.
\end{proof}

\section{Application to mean field games}\label{MFG}
Let $(\Omega,\f,\mathbb{P})$ denote a complete filtered probability space augmented by all the $\mathbb{P}$-null sets on which an $n$-dimensional Brownian motion $\{W_t,\ 0\le t\le T\}$ is defined. In this section, we investigate the optimal strategy of the MFG:
\begin{equation}\label{control}
	\left\{
	\begin{aligned}
		&\hat{\alpha}\in\argmin_{\alpha}J(\alpha|\hat{m}):=\e\big[\int_0^T f(t,X_t^\alpha,\hat{m}_t,\alpha_t)dt+g(X^\alpha_T,\hat{m}_T)\big];\\
		&X_t^\alpha=\xi_0+\int_0^tb(s,X_s^\alpha,\hat{m}_s,\alpha_s)ds+\int_0^t\sigma(s,X_s^\alpha,\hat{m}_s)dW_s,\quad t\in[0,T];\\
		&\hat{m}_t=\mathcal{L}(X_t^{\hat{\alpha}}),\quad t\in[0,T],\quad \xi_0\sim m_0.
	\end{aligned}
	\right.
\end{equation}
where $\alpha$ is an admissible control, taking values in $\br^{d}$, adapted to the filtration $\f=\{\f_t,\ 0\le t\le T\}$ generated by $(W,\xi_0)$. Here, functions $\sigma$ and $g$ are as defined in previous sections, and
\begin{align*}
	&b:[0,T]\times\br^n\times\pr_1(\br^n)\times\br^{d}\to\br^n,\\
	&f:[0,T]\times\br^n\times\pr_1(\br^n)\times\br^{d}\to\br.
\end{align*}
We define the Hamiltonian $H$ as 
\begin{equation}\label{H}
	\begin{split}
		&H(t,x,m,\alpha,p):=\langle p,b(t,x,m,\alpha)\rangle+f(t,x,m,\alpha),\\
		&\qquad(t,x,m,\alpha,p) \in [0,T]\times\br^n\times \pr_1(\br^n)\times\br^{d}\times\br^n,
	\end{split}
\end{equation}
and the minimizing control function $\phi$ 
\begin{equation}\label{Hphi}
	\begin{split}
		\phi(t,x,m,p):=\argmin_{\alpha\in \br^d}H(t,x,m,\alpha,p), \quad (t,x,m,p) \in [0,T]\times\br^n\times\pr_1(\br^n)\times\br^n,
	\end{split}
\end{equation}
under appropriate conditions.

\subsection{Resolvability of MFEs}
We first use the the results in the last section to investigate the solvability of the following second order MFEs
\begin{equation}\label{pde_bf}
	\left\{
	\begin{aligned}
		&\frac{\partial u}{\partial t}(t,x)+H(t,x,m(t,\cdot),\phi(t,x,m(t,\cdot),Du(t,x)),Du(t,x))\\
		&\qquad+\sum_{i,j=1}^na_{ij}(t,x,m(t,\cdot))\frac{\partial^2u}{\partial x_i\partial x_j}(t,x)=0,\quad (t,x)\in(0,T]\times\br^n;\\
		&\frac{\partial m}{\partial t}(t,x)+\sum_{i=1}^n\frac{\partial}{\partial x_i}[b_i(t,x,m(t,\cdot),\phi(t,x,m(t,\cdot),Du(t,x)))m(t,x)]\\
		&\qquad-\sum_{i,j=1}^n\frac{\partial^2}{\partial x_i\partial x_j}[a_{ij}(t,x,m(t,\cdot))m(t,x)]=0,\quad (t,x)\in[0,T)\times\br^n;\\
		&m(0,x)=m_0(x), \quad u(T,x)=g(x,m(T,\cdot)),\quad x\in\br^n,
	\end{aligned}
	\right.
\end{equation}
under Assumptions (A1)-(A3) and the following assumptions on functions $b$, $f$ and $\phi$.

\textbf{(B1)} The function $b(t,x,m,\cdot):\br^{d}\to\br^n$ is differentiable for all $(t,x,m)\in[0,T]\times\br^n\times\pr_1(\br^n)$. Moreover,
\begin{align*}
	|b(t,x,m,\alpha)|\le L(1+|\alpha|),\quad (t,x,m,\alpha)\in [0,T]\times\br^n\times \pr_1(\br^n)\times\br^{d}.
\end{align*}
For $N\in(0,+\infty)$ and $(t,x,m,\alpha),\ (t',x',m',\alpha')\in[0,T]\times\br^n\times\pr_1(\br^n)\times B(0,N)$, where $B(0,N):=\{\alpha\in\br^d:|\alpha|\le N\}$,
\begin{align*}
	&|b(t',x',m',\alpha')-b(t,x,m,\alpha)|\le L_N(|t'-t|^\frac{1}{2}+|x'-x|+d_1(m',m)+|\alpha'-\alpha|),
\end{align*}
for some constant $L_N$ depending on $N$.

\textbf{(B2)}
The function $f(t,x,m,\cdot):\br^{d}\to\br$ is differentiable for all $(t,x,m)\in[0,T]\times\br^n\times\pr_1(\br^n)$. Moreover,
\begin{align*}
	|f(t,x,m,\alpha)|\le L(1+|\alpha|^2),\quad (t,x,m,\alpha)\in [0,T]\times\br^n\times \pr_1(\br^n)\times\br^{d}.
\end{align*}
For $N\in(0,+\infty)$ and $(t,x,m,\alpha),\ (t',x',m',\alpha')\in[0,T]\times\br^n\times\pr_1(\br^n)\times B(0,N)$, where $B(0,N):=\{\alpha\in\br^d:|\alpha|\le N\}$,
\begin{align*}
	&|f(t',x',m',\alpha')-f(t,x,m,\alpha)|\le L_N(|t'-t|^\frac{1}{2}+|x'-x|+d_1(m',m)+|\alpha'-\alpha|),
\end{align*}
for some constant $L_N$ depending on $N$.

\textbf{(B3)} For each $(t,x,m,p)\in[0,T]\times\br^n\times\pr_1(\br^n)\times\br^n$, there exits a unique vector $\phi(t,x,m,p)\in\br^{d}$ satisfying \eqref{Hphi}. Moreover,
\begin{align*}
	|\phi(t,x,m,\alpha)|\le L(1+|p|),\quad (t,x,m,p) \in [0,T]\times\br^n\times\pr_1(\br^n)\times\br^n.
\end{align*}
For $N\in(0,+\infty)$ and $(t,x,m,p),\ (t',x',m',p')\in[0,T]\times\br^n\times\pr_1(\br^n)\times B(0,N)$, where $B(0,N):=\{p\in\br^n:|p|\le N\}$,
\begin{align*}
	&|\phi(t',x',m',p')-\phi(t,x,m,p)|\le L_N(|t'-t|^\frac{1}{2}+|x'-x|+d_1(m',m)+|p'-p|),
\end{align*}
for some constant $L_N$ depending on $N$.

Actually, MFEs \eqref{pde_bf} is the Hamilton system of the optimal control of the non-linear (deterministic) FP equation. The uniform boundedness assumption in the state $x$ of functions $(b,f,g,\phi)$ seems to be restrictive. However,  it is relaxed to include some typical  linear-quadratic cases in Section~\ref{EXA}. Assumption (B3) is also used by Carmona and Delarue in \cite{book_mfg} and by Nourian and Caines in \cite{NM}. Note that Assumption (B3) is satisfied under a convexity condition of $H$ in $\alpha$ and appropriate regularity conditions of $b$ and $f$. Similar convexity assumption can also be found in Porretta \cite{POA}. We now give examples of smooth functions $(\sigma,b,f)$ satisfying Assumptions (A1') and (B1)-(B3).

\begin{example}\label{exp_1}
	We define for $1\le i,j\le n$ and $(x,m,\alpha)\in\br^n\times\pr_1(\br^n)\times\br^{d}$,
	\begin{align*}
		&b_i(x,m,\alpha):=\sin(x_i-\bar{m}_i)+{B^*_i}\alpha,\\ &\sigma_{ij}(x,m):=[2+\cos(x_i-\bar{m}_j)]\delta_{ij},\\ 
		&f(x,m,\alpha):=\frac{|\alpha|^2}{2}+\sum_{j=1}^{n}\sin(x_j-\bar{m}_j),
	\end{align*}
	where $B_i\in\br^{d}$, $\bar{m}_i:=\int_{\br^n} x_im(dx)$ and $\delta_{ij}=1$ when $i=j$ and $\delta_{ij}=0$ when $i\neq j$. In view of Kantorovich-Rubinstein Theorem \cite[Theorem 5.5]{PC}, functions $(\sigma,b,f)$ defined above satisfy Assumptions (A1'), (B1) and (B2). Moreover, $\phi(p)=-{B^{*}}p$ for $p \in \br^n$ where $B^*=(B_1,\cdots B_n)\in\br^{d\times n}$, and Assumption (B3) is satisfied. More generally, our assumptions are satisfied for the following class of functions:
	\begin{align*}
		&b(x,m,\alpha):=B^0(x,m)+{B}\alpha,\quad f(x,m,\alpha):=\frac{|\alpha|^2}{2}+F(x,m),\\
		&(B,\sigma,F):\br^n\times\pr_1(\br^n)\to\br^n\times\br^{n\times n}\times\br,
	\end{align*}
    where {$B\in\br^{n\times d}$,} $\sigma$ is non-degenerate and functions $(B^0,\sigma,\sigma_x,F)$ are bounded and Lipschitz continuous.
\end{example}

In view of FBPDEs \eqref{In:3}, we set
\begin{align}\label{hr_bf}
	\hr(t,x,m,p):=H(t,x,m,\phi(t,x,m,p),p),\quad (t,x,m,p)\in[0,T]\times\br^n\times\pr_1(\br^n)\times\br^n.
\end{align}
From Assumption (B3), we know that for $(t,x,m,p)\in[0,T]\times\br^n\times\pr_1(\br^n)\times\br^n$,
\begin{align*}
	0&=\frac{\partial H}{\partial\alpha}(t,x,m,\alpha,p)|_{\alpha=\phi(t,x,m,p)}\\
	&=\frac{\partial b}{\partial \alpha}(t,x,m,\phi(t,x,m,p))p+\frac{\partial f}{\partial \alpha}(t,x,m,\phi(t,x,m,p)).
\end{align*}
Therefore, 
\begin{align*}
	&\frac{\partial\hr}{\partial p}(t,x,m,p)=b(t,x,m,\phi(t,x,m,p)).
\end{align*}
From Assumptions (B1)-(B3), we know that Assumption (A4) holds for $\hr$ defined in \eqref{hr_bf}. As a consequence of Theorem~\ref{thm:exist}, we have the following existence result for MFEs \eqref{pde_bf}.

\begin{theorem}\label{thm:mfe_exist}
	Let Assumptions (A1)-(A3) and (B1)-(B3) be satisfied. Then, MFEs \eqref{pde_bf} has a solution $(u,m)\in C^{1+\frac{1}{4},2+\frac{1}{2}}([0,T]\times\br^n)\times C^{\frac{1}{2}}([0,T],\p_1(\br^n))$.
\end{theorem}

\begin{remark}
	From Theorem~\ref{thm:improve}, we know that MFEs \eqref{pde_bf} has a classical solution $(u,m)\in C^{1+\frac{1}{4},2+\frac{1}{2}}([0,T]\times\br^n)\times C^{1+\frac{1}{4},2+\frac{1}{2}}([0,T]\times\br^n)$ under Assumptions (A1'), (A2), (A3'), (B1), (B2) and (B3), and when $b$ is differentiable in $x$ and $\phi$ is differentiable in $(x,p)$, and for $N\in(0,+\infty)$ and $(t,x,m,\alpha,p),\ (t',x',m',\alpha',p')\in[0,T]\times\br^n\times\pr_1(\br^n)\times \{\alpha\in\br^d:|\alpha|\le N\}\times\{p\in\br^n:|p|\le N\}$,
	\begin{align*}
		&|(\frac{\partial b}{\partial x},\frac{\partial b}{\partial \alpha})(t,x,m,\alpha)|+|(\frac{\partial \phi}{\partial x},\frac{\partial \phi}{\partial p})(t,x,m,p)|\le L_N,\\
		&|(\frac{\partial b}{\partial x},\frac{\partial b}{\partial \alpha},\frac{\partial \phi}{\partial x},\frac{\partial \phi}{\partial p})(t',x',m',\alpha',p')-(\frac{\partial b}{\partial x},\frac{\partial b}{\partial \alpha},\frac{\partial \phi}{\partial x},\frac{\partial \phi}{\partial p})(t,x,m,\alpha,p)|\\
		&\qquad \le L_N(|t'-t|^\frac{1}{2}+|x'-x|+d_1(m',m)+|\alpha'-\alpha|+|p'-p|),
	\end{align*}
	for some constant $L_N$ depending on $N$. Functions in Example~\ref{exp_1} also satisfy the above conditions.
\end{remark}

\begin{remark}\label{rk:FP}
	As an immediate  consequence of Theorem~\ref{thm:mfe_exist} and Lemmas~\ref{lem:weak} and \ref{pp:weak}, we have the existence result of the following FP equation:
	\begin{equation}\label{FP}
		\left\{
		\begin{aligned}
			&\frac{\partial m}{\partial t}(t,x)-\sum_{i,j=1}^n\frac{\partial^2}{\partial x_i \partial x_j}(a_{ij}(t,x,m(t,\cdot))m(t,x))\\
			&\qquad+div(b(t,x,m(t,\cdot))m(t,x))=0,\quad  (t,x)\in(0,T]\times\br^n;\\
			&m(0,x)=m_0(x),\quad x\in\br^n.
		\end{aligned}
		\right.
	\end{equation}
	Suppose that the function $a$ is non-degenerate, functions $(a,b)$ are bounded, Lipschitz continuous in $(x,m)\in\br^n\times \p_1(\br^n)$ and $\frac{1}{2}$-Hölder continuous in $t\in[0,T]$, and the initial value $m_0$ satisfies Assumption (A3). Then, FP equation \eqref{FP} has a solution $m\in C^{\frac{1}{2}}([0,T],\p_1(\br^n))$, and $m(t)=\lr(X_t)$, where $X\in\mathcal{S}^2_{\f}(0,T)$ is the solution of the McKean-Vlasov SDE
	\begin{equation*}\label{MFSDE}
		\left\{
		\begin{aligned}
			&dX_t=b(t,X_t,\lr(X_t))dt+\sigma(t,X_t,\lr(X_t))dW_t,\quad t\in(0,T];\\
			&X_0=\xi_0\sim m_0.
		\end{aligned}
		\right.
	\end{equation*}
	If moreover function $a$ is twice differentiable in $x$ and  function $b$ is differentiable in $x$, with the derivatives being bounded, Lipschitz continuous in $m\in \p_1(\br^n)$ and $\frac{1}{2}$-Hölder continuous in $(t,x)\in[0,T]\times\br^n$, and the initial value $m_0$ satisfies Assumption (A3'), then $m\in C^{1+\frac{1}{4},2+\frac{1}{2}}([0,T]\times\br^n)$ is a classical solution of FP equation \eqref{FP}.
\end{remark}

Now we give the uniqueness result of MFEs \eqref{pde_bf} when the coefficients $(b,\sigma)$ are independent of $m$ and the function $f$ is of the form
\begin{align}\label{uni_f_sep}
	f(t,x,m,\alpha)=f^0(t,x,m)+f^1(t,x,\alpha),\quad
	(t,x,m,\alpha)\in [0,T]\times\br^n\times\pr_1(\br^n)\times\br^{d}.
\end{align}
Let Assumptions (A1)-(A3) and (B1)-(B3), and  the following monotonicity conditions be satisfied:
\begin{align*}
	&\int[f^0(t,x,m_2)-f^0(t,x,m_1)](m_2-m_1)(dx)>0, \quad t\in[0,T],\  m_1,m_2\in\pr_1(\br^n),\  m_1\neq m_2,\\
	&\int[g(x,m_2)-g(x,m_1)](m_2-m_1)(dx)\geq0, \quad m_1,m_2\in\pr_1(\br^n).
\end{align*}

\begin{theorem}\label{thm:mfe_uni}
	Under the above conditions, MFEs \eqref{pde_bf} has no more than one solution in $C^{1+\frac{1}{4},2+\frac{1}{2}}([0,T]\times\br^n)\times C^{\frac{1}{2}}([0,T],\p_1(\br^n))$.
\end{theorem}

\begin{proof}
	Since the coefficients $(b,\sigma)$ are independent of $m$ and the function $f$ is of the form \eqref{uni_f_sep}, the Hamiltonian $H$ can be divided into two parts
	\begin{equation*}
		H(t,x,m,\alpha,p)=f^0(t,x,m)+H^1(t,x,\alpha,p)
	\end{equation*}
	for $(t,x,m,\alpha,p) \in [0,T]\times\br^n\times \pr_1(\br^n)\times\br^{d}\times\br^n$, where
	\begin{equation*}
		\begin{split}
			&H^1(t,x,\alpha,p):=\langle b(t,x,\alpha),p\rangle+f^1(t,x,\alpha),
		\end{split}
	\end{equation*}
	and the minimizing control function $\phi$ satisfies
	\begin{equation}\label{uni_phi}
		\begin{split}
			\phi(t,x,p):=\argmin_{\alpha\in \br^d}H^1(t,x,\alpha,p), \quad (t,x,p) \in [0,T]\times\br^n\times\br^n.
		\end{split}
	\end{equation}
	We set for $(t,x,m,p)\in[0,T]\times\br^n\times\pr_1(\br^n)\times\br^n$,
	\begin{align*}
		\hr^0(t,x,m):=f^0(t,x,m),\quad \hr^1(t,x,p):=H^1(t,x,\phi(t,x,p),p).
	\end{align*}
    In view of \eqref{uni_phi}, for $(t,x,p)\in [0,T]\times\br^n\times\br^n$,
    \begin{align*}
    	\frac{\partial \hr^1}{\partial p}(t,x,p)=b(t,x,\phi(t,x,p)).
    \end{align*}
    Therefore, from \eqref{uni_phi}, we have for any $(t,x,p_1,p_2)\in[0,T]\times\br^n\times\br^n\times\br^n$,
    \begin{align*}
    	&\hr^1(t,x,p_1)+\langle \frac{\partial\hr^1}{\partial p}(t,x,p_1),p_2-p_1 \rangle\\
    	&=\langle b(t,x,\phi(t,x,p_1)),p_1\rangle+f^1(t,x,\phi(t,x,p_1))+\langle b(t,x,\phi(t,x,p_1)), p_2-p_1 \rangle\\
    	&=H^1(t,x,\phi(t,x,p_1),p_2)\\
    	&\geq H^1(t,x,\phi(t,x,p_2),p_2)=\hr^1(t,x,p_2).
    \end{align*}
    Then, the uniqueness result of MFEs \eqref{pde_bf} follows from Theorem~\ref{thm:uni}.
\end{proof}

\subsection{A verification theorem}\label{APP}
In this subsection, we give a verification theorem for MFG \eqref{control}. Suppose that $(u,m)\in C^{1+\frac{1}{4},2+\frac{1}{2}}([0,T]\times\br^n)\times C^{\frac{1}{2}}([0,T],\pr_1(\br^n))$ is a solution of the MFEs \eqref{pde_bf}. The following theorem shows that the feedback strategy
\begin{align*}
	\bar{\alpha}(t,x):=\phi(t,x,m(t,\cdot),Du(t,x)),\quad (t,x)\in[0,T]\times\br^n
\end{align*}
is optimal for MFG \eqref{control}.

\begin{theorem}[verification theorem]\label{thm_mfg}
	Let Assumptions (A1), (A3), (B1) and (B3) be satisfied and let $(u,m)\in C^{1+\frac{1}{4},2+\frac{1}{2}}([0,T]\times\br^n)\times C^{\frac{1}{2}}([0,T],\pr_1(\br^n))$ be a solution of MFEs \eqref{pde_bf}. Then, $\tilde{\alpha}:=\{\bar{\alpha}(t,\bar{X}_t),\ 0\le t\le T\}$ is an optimal control for MFG \eqref{control}, where $\bar{X}\in\mathcal{S}^2_{\f}(0,T)$ is the solution of SDE
	\begin{equation}\label{sde_app}
		\bar{X}_t=\xi_0+\int_0^t b(s,\bar{X}_s,m(s,\cdot),\bar{\alpha}(s,\bar{X}_s))ds+\int_0^t\sigma(s,\bar{X}_s,m(s,\cdot))dW_s,\quad t\in[0,T].
	\end{equation}
\end{theorem}

\begin{proof}
	Given $(u,m)$, similar as the proof of Lemma~\ref{lem:weak}, we know that $\lr(\bar{X}_t)$ is a weak solution of the following PDE for $\bar{m}$:
	\begin{equation}\label{equ_app}
		\left\{
		\begin{aligned}
			&\frac{\partial \bar{m}}{\partial t}(t,x)-\sum_{i,j=1}^n\frac{\partial^2}{\partial x_i\partial x_j}[a_{ij}(t,x,m(t,\cdot))\bar{m}(t,x)]\\
			&\qquad+\sum_{i=1}^n\frac{\partial}{\partial x_i}[b_i(t,x,m(t,\cdot),\phi(t,x,m(t,\cdot),Du(t,x)))\bar{m}(t,x)]=0,\quad (t,x)\in(0,T]\times\br^n;\\
			&\bar{m}(0,x)=m_0(x),\quad x\in\br^n.
		\end{aligned}
		\right.
	\end{equation}
	From the fact that $(u,m)\in C^{1+\frac{1}{4},2+\frac{1}{2}}([0,T]\times\br^n)\times C^{\frac{1}{2}}([0,T],\pr_1(\br^n))$ and Assumptions (A1), (A3), (B1) and (B3), it is easy to check that the coefficients of PDE \eqref{equ_app} belong to the class $C^{\frac{1}{4},\frac{1}{2}}([0,T]\times\br^n)$ and the initial function belongs to $C^{\frac{1}{2}}(\br^n)$. Similar as Lemma~\ref{pp:weak}, we know that $m$ is the unique weak solution to PDE \eqref{equ_app}. Therefore, $m(t)=\lr(\bar{X}_t)$ for $t\in[0,T]$. 
	
	Let $\alpha$ be an adapted control and $X$ be the corresponding state
	\begin{equation}
        {X}_t=\xi_0+\int_0^tb(s,{X}_s,m(s,\cdot),{\alpha}_s)ds+\int_0^t\sigma(s,{X}_s,m(s,\cdot))dW_s,\quad t\in[0,T].
	\end{equation}
	We have from Itô's formula that
	\begin{align*}
		\e[u(T,X_t)]=&\e\Big[u(0,\xi_0)+\int_0^T \big[\frac{\partial u}{\partial t}(t,X_t)+\langle Du(t,X_t), b(t,X_t,m(t,\cdot),\alpha_t)\rangle\\
		&\quad+\sum_{i,j=1}^na_{ij}(t,X_t,m(t,\cdot))\frac{\partial^2u}{\partial x_i\partial x_j}(t,X_t)\big] dt\Big].
	\end{align*}
	From the terminal condition of $u$ and the definition of the Hamiltonian, we have
	\begin{equation}\label{thm:app_1}
		\begin{split}
			J(\alpha|m)=&\e\Big[u(0,\xi_0)+\int_0^T\big[ \frac{\partial u}{\partial t}(t,X_t)+\sum_{i,j=1}^na_{ij}(t,X_t,m(t,\cdot))\frac{\partial^2u}{\partial x_i\partial x_j}(t,X_t)\\
			&\quad+H(t,X_t,m(t,\cdot),\alpha_t,Du(t,X_t))\big]dt\Big].
		\end{split}
	\end{equation}
	From Assumptions (B3), we know that
	\begin{equation}\label{thm:app_2}
		\begin{split}
			&H(t,X_t,m(t,\cdot),\alpha_t,Du(t,X_t))\\
			&\geq H(t,X_t,m(t,\cdot),\phi(t,X_t,m(t,\cdot),Du(t,X_t)),Du(t,X_t)).
		\end{split}
	\end{equation}
	Plugging \eqref{thm:app_2} into \eqref{thm:app_1}, since $u$ is a solution of \eqref{pde_bf}, we have
	\begin{equation*}\label{thm:app_3}
		\begin{split}
			J(\alpha|m)&\geq \e\Big[u(0,\xi_0)+\int_0^T \big[\frac{\partial u}{\partial t}(t,X_t)+\sum_{i,j=1}^na_{ij}(t,X_t,m(t,\cdot))\frac{\partial^2u}{\partial x_i\partial x_j}(t,X_t)\\
			&\qquad+H(t,X_t,m(t,\cdot),\phi(t,X_t,Du(t,X_t)),Du(t,X_t)) \big]dt\Big]\\
			&= \e[u(0,\xi_0)]=\int_{\br^n}u(0,x)m_0(dx).
		\end{split}
	\end{equation*}
	This shows that $J(\alpha|m)\geq \e[u(0,\xi_0)]$ for any adapted control $\alpha$. If we replace $\alpha$ with $\tilde{\alpha}$ in the above computation, then the state process $X$ becomes $\bar{X}$ and all the above inequalities are equalities. So $J(\tilde{\alpha}|m)=\e[u(0,\xi_0)]$ and the proof is complete.
\end{proof}

As a direct consequence of Theorems~\ref{thm:mfe_exist} and \ref{thm_mfg}, we have the following result.

\begin{corollary}\label{thm_mfg1}
	Let Assumptions (A1)-(A3) and (B1)-(B3) be satisfied. Then, $\tilde{\alpha}:=\{\bar{\alpha}(t,\bar{X}_t),\ 0\le t\le T\}$ is an optimal control for MFG \eqref{control}, where $(u,m)\in C^{1+\frac{1}{4},2+\frac{1}{2}}([0,T]\times\br^n)\times C^{\frac{1}{2}}([0,T],\p_1(\br^n))$ is a solution of MFEs \eqref{pde_bf} and $\bar{X}$ is the solution of SDE \eqref{sde_app}.
\end{corollary}

\begin{remark}
	Carmona and Delarue \cite[Theorem 4.44, p.263]{book_mfg} consider the MFG problem \eqref{control} under the nondegenerate condition with a probabilistic approach. Via the stochastic maximum principle for optimal controls \cite{YH1}, Carmona and Delarue \cite{PA} transform the MFGs into resolvability of distribution dependent FBSDEs. The existence result of the FBSDEs requires a monotonicity condition, which was proposed by Peng and Wu \cite{SP}. Carmona and Delarue prove the existence of the FBSDEs within a linear-convex framework where the volatility $\sigma$ is a constant. Moreover, a weak mean-reverting condition is required. In our work, we use a analytical approach and allow the volatility to depend upon the distribution of the state. The existence of classical solution requires the differentiability of coefficients in $x$, but does not require the monotonicity condition. These differentiability assumptions are expected to be relaxed to more general cases with some appropriate approximation technique. But this would involve further complicated  analysis and go beyond the scope of this paper. Actually, the mean field FBSDEs and mean field FBPDEs are connected to each other. A solution of MFEs \eqref{pde_bf} gives a decoupling function of associated mean field FBSDEs by setting $Y_t=Du(t,X_t)$ and $Z_t=\sigma(t,X_t,m(t,\cdot))D^2u(t,X_t)$, where $(Y,Z)$ are the associated adjoint processes. However, obtaining a solution of FBPDEs from a solution of FBSDEs requires additional regularity conditions of the coefficients. Our work builds a bridge between them.
\end{remark}

\subsection{Discussion on master equation}\label{discuss}
In this subsection, we discuss the relations between MFEs \eqref{pde_bf} and master equation \eqref{master}. The master equation is based on the derivative in $m\in \pr(\br^n)$. Here, we use the concept of linear functional derivative introduced by Carmona and Delarue \cite{CAR}. There are  various studies on the differentiability of functionals of probability distribution. We refer to Lions \cite{PL} for the $L$-derivative for functions defined on the space $\pr_2(\br^n)$. We also refer the reader to Bensoussan et al. \cite{AB4} for a discussion of relations between the differentiability of functions of probability distribution in $\pr_2(\br^n)$ and of probability density in $L^2(\br^n)$. 

Suppose that master equation \eqref{master} has a classical solution $U$, then, it is easy to check that \eqref{decouple} decouples MFEs \eqref{pde_bf}. Now we try to construct a solution of master equation \eqref{master} by using the solution of MFEs \eqref{pde_bf}. Suppose that $(u,m)\in C^{1+\frac{1}{4},2+\frac{1}{2}}([0,T]\times\br^n)\times C^{\frac{1}{2}}([0,T],\pr_1(\br^n))$ is the soluton of MFEs \eqref{pde_bf}. For $t\in[0,T]$ and $\mu$ satisfying Assumption (A3), we denote by $(u^{t,\mu},m^{t,\mu})\in C^{1+\frac{1}{4},2+\frac{1}{2}}([t,T]\times\br^n)\times C^{\frac{1}{2}}([t,T],\pr_1(\br^n))$ the solution of the following MFEs defined on $[t,T]\times\br^n$
\begin{equation*}
	\left\{
	\begin{aligned}
		&\frac{\partial u^{t,\mu}}{\partial t}(s,x)+H(s,x,m^{t,\mu}(s,\cdot),\phi(s,x,m^{t,\mu}(s,\cdot),Du^{t,\mu}(s,x)),D^{t,\mu}u(s,x))\\
		&\qquad+\sum_{i,j=1}^na_{ij}(s,x,m^{t,\mu}(s,\cdot))\frac{\partial^2u^{t,\mu}}{\partial x_i\partial x_j}(s,x)=0,\quad (s,x)\in(t,T]\times\br^n;\\
		&\frac{\partial m^{t,\mu}}{\partial t}(s,x)+\sum_{i=1}^n\frac{\partial}{\partial x_i}[b_i(s,x,m^{t,\mu}(s,\cdot),\phi(s,x,m^{t,\mu}(s,\cdot),Du^{t,\mu}(s,x)))m^{t,\mu}(s,x)]\\
		&\qquad-\sum_{i,j=1}^n\frac{\partial^2}{\partial x_i\partial x_j}[a_{ij}(s,x,m^{t,\mu}(s,\cdot))m^{t,\mu}(s,x)]=0,\quad (s,x)\in[t,T)\times\br^n;\\
		&m^{t,\mu}(t,x)=\mu(x), \quad u^{t,\mu}(T,x)=g(x,m^{t,\mu}(T,\cdot)),\quad x\in\br^n.
	\end{aligned}
	\right.
\end{equation*}
Actually, from the flow property, we have for $t\in[0,T]$,
\begin{align}\label{flow}
	m^{t,m(t)}(s,\cdot)=m(s,\cdot),\quad u^{t,m(t)}(s,x)=u(s,x),\quad (s,x)\in[t,T]\times\br^n.
\end{align}
We define for $(t,x)\in[0,T]\times\br^n$ and $\mu$ satisfying Assumption (A3)
\begin{align}\label{rela}
	U(t,x,\mu):=u^{t,\mu}(t,x),
\end{align}
then, since $u$ is the solution of Bellman equation, we have the following probabilistic interpretation for $U$:
\begin{align*}
	U(t,x,\mu)=\e\Big[&\int_t^T f(s,X_s^{t,x,\mu},m^{t,\mu}(s,\cdot),\phi(s,X_s^{t,x,\mu},m^{t,\mu}(s,\cdot),Du^{t,\mu}(s,X_s^{t,x,\mu})))ds\\
	&+g(X_T^{t,x,\mu},m^{t,\mu}(T,\cdot))\Big],
\end{align*}
where $X^{t,x,\mu}\in\mathcal{S}^2_\f(t,T)$ is the solution of the following SDE
\begin{align*}
	X_s^{t,x,\mu}=&x+\int_t^s b(r,X_r^{t,x,\mu},m^{t,\mu}(r,\cdot),\phi(r,X_r^{t,x,\mu},m^{t,\mu}(r,\cdot),Du^{t,\mu}(r,X_r^{t,x,\mu})))dr\\
	&+\int_t^s \sigma(r,X_r^{t,x,\mu},m^{t,\mu}(r,\cdot))dW_r,\quad s\in[t,T].
\end{align*}
Suppose that $U$ is smooth enough. Then, from the It\^o's formula for $U$ (see \cite{BR, OUR1}), the flow property \eqref{flow} and the relationship \eqref{rela}, we know that $U$ satisfies the master equation \eqref{master}. A rigorous proof requires the differentiability property of the map $\mu\mapsto U(t,x,\mu)$ and relies on a preliminary study of the differential properties of the flow $\mu\mapsto (u^{t,\mu}, m^{t,\mu})$, which is challenging. We refer to our work \cite{OUR1} for the study of the differentiability property of the map $\mu\mapsto U(t,x,\mu)$ and the solvability of master equations of particular types. 

\section{Linear quadratic problems}\label{EXA}
In this section, we relax the boundedness assumption in the state $x$ of functions $(a,b,f,g,\phi)$ to include the linear-quadratic case. We denote by $\mathcal{E}(b,\sigma,f,g,m_0)$ the MFEs \eqref{pde_bf} with coefficients $(b,\sigma,f,g,m_0)$, and denote by $\phi^{b,f}$ the feedback function \eqref{Hphi} corresponding to $(b,f)$. We denote by $P(b,\sigma,f,g,\xi_0)$ the MFG \eqref{control} corresponding to $(b,\sigma,f,g,\xi_0)$. We first give the following corollary of Theorems~\ref{thm:mfe_exist} and \ref{thm_mfg}, which allows the drift $b$ to be linear in $x$.
\begin{corollary}\label{cor1}
	Suppose that functions $(\sigma,f,g)$ are independent of $x$,
	\begin{align*}
		&b(t,x,m,\alpha)=\lambda(t)x+b'(t,m,\alpha),\quad b':[0,T]\times\pr_1(\br^n)\times\br^{d}\to\br^n,
	\end{align*}
	with the function $\lambda:[0,T]\to\br^{n\times n}$ being bounded and continuous and functions $(b',\sigma,f,g,m_0)$ satisfy Assumptions (A1)-(A3) and (B1)-(B3). Then, $\mathcal{E}(b,\sigma,f,g,m_0)$ has a solution $(u,m)\in C^{1,2}([0,T]\times\br^n)\times C^{0}([0,T],\p_1(\br^n))$, and the feedback $\alpha(t,x):=\phi^{b,f}(t,m(t,\cdot),Du(t,x))$ is an optimal control of $P(b,\sigma,f,g,\xi_0)$.
\end{corollary}

\begin{proof}
We use the following transformation:
\begin{align}\label{trans}
	Y_t:=e^{-\int_0^t \lambda(s)ds}X_t,\quad t\in[0,T].
\end{align}
Then, $\mathcal{E}(b,\sigma,f,g,m_0)$ for $(u,m)$ is transformed into $\mathcal{E}(\tilde{b},\tilde{\sigma},{f},{g},m_0)$ for $(\tilde{u},\tilde{m})$, and $P(b,\sigma,f,g,\xi_0)$ is transformed into MFG $P(\tilde{b},\tilde{\sigma},f,g,\xi_0)$, where for $(t,m,\alpha,p)\in[0,T]\times\pr_1(\br^n)\times\br^{d}\times\br^n$,
\begin{align}
	&\tilde{b}(t,m,\alpha)=e^{-\int_0^t \lambda(s)ds}b'(t,m,\alpha),\qquad \tilde{\sigma}(t,m)=e^{-\int_0^t \lambda(s)ds}\sigma(t,m),\notag\\
	&\phi^{\tilde{b},f}(t,m,p)=\phi^{b,f}(t,m,e^{-\int_0^t \lambda(s)ds}p),\label{cor1_0}
\end{align}
and for $(t,y)\in[0,T]\times\br^n$,
\begin{align}
	&\tilde{u}(t,y)=u(t,e^{\int_0^t \lambda(s)ds}y),\qquad \tilde{m}(t,y)=e^{\int_0^t \lambda(s)ds}m(t,e^{\int_0^t \lambda(s)ds}y).\label{cor1_1}
\end{align}
Since functions $(\sigma,b',\phi^{b,f})$ satisfy Assumptions (A1), (B1) and (B3) and function $\lambda$ is bounded and continuous, it is easy to check that functions $(\tilde{\sigma},\tilde{b},\phi^{\tilde{b},f})$ satisfy Assumptions (A1), (B1) and (B3). From Theorem~\ref{thm:mfe_exist}, $\mathcal{E}(\tilde{b},\tilde{\sigma},{f},{g},m_0)$ has at least one solution $(\tilde{u},\tilde{m})\in C^{1+\frac{1}{4},2+\frac{1}{2}}([0,T]\times\br^n)\times C^{\frac{1}{2}}([0,T],\p_1(\br^n))$. From Corollary~\ref{thm_mfg1}, the feedback $\tilde{\alpha}(t,y):=\phi^{\tilde{b},f}(t,\tilde{m}(t,\cdot),D\tilde{u}(t,y))$ is an optimal control of $P(\tilde{b},\tilde{\sigma},f,g,\xi_0)$. In view of \eqref{cor1_0}-\eqref{cor1_1}, $\mathcal{E}(b,\sigma,f,g,m_0)$ has a solution $({u},{m})\in C^{1,2}([0,T]\times\br^n)\times C^{0}([0,T],\p_1(\br^n))$, and the feedback $\alpha(t,x):=\phi^{b,f}(t,m(t,\cdot),Du(t,x))$ is an optimal control of $P(b,\sigma,f,g,\xi_0)$.
\end{proof}

Now we consider the linear-quadratic case.
\begin{corollary}\label{cor_lq}
Consider $P(b,\sigma,f,g,\xi_0)$ with coefficients
\begin{align*}
	&b(x,\alpha)=Ax+B\alpha,\quad \sigma:[0,T]\times\pr_1(\br^n)\to\br^{n\times n},\\
	&f(t,x,m,\alpha)=\frac{1}{2}x^*Qx+\frac{1}{2}\alpha^*R\alpha+F(t,m),\quad g(x,m)=\frac{1}{2}x^*Mx+G(m),
\end{align*}
where $A,B,Q,R,M$ are matrices of suitable sizes, $Q,R,M$ are symmetric and $R$ is positive definite. Suppose that $\sigma$ is non-degenerate, functions $(\sigma,F,G)$ are bounded, Lipschitz continuous in $m\in\pr_1(\br^n)$ and $\frac{1}{2}$-Hölder continuous in $t\in[0,T]$, and the initial value $m_0$ satidfies Assumption (A3). Then, the optimal control is given by the feedback $\alpha(t,x):=-R^{-1}B^*P(t)x$, where $P(\cdot)$ is the solution of the standard Riccati equation \eqref{lq_1}.
\end{corollary}

\begin{proof}
We look for a solution of $\mathcal{E}(b,\sigma,f,g,m_0)$ of the form $u(t,x)=\beta(t)+\frac{1}{2}x^*P(t)x$. Then, the feedback is of the form $\alpha(t,x)=-R^{-1}B^*P(t)x$, and $\mathcal{E}(b,\sigma,f,g,m_0)$ is equivalent to the following equations:
\begin{align}
	&\dot{P}(t)+A^*P(t)+P(t)A-2P(t)BR^{-1}B^*P(t)+Q=0,\quad P(T)=M,\label{lq_1}\\
	&\dot{\beta}(t)+\sum_{i,j=1}^na_{ij}(t,m(t,\cdot))P_{ij}(t)+F(t,m(t,\cdot))=0,\quad \beta(T)=G(m(T,
	\cdot)),\label{lq_2}\\
	&\frac{\partial m}{\partial t}(t,x)+\text{div}[m(t,x)(A-BR^{-1}B^*P(t))x]\notag\\
	&\qquad-\sum_{i,j=1}^na_{ij}(t,m(t,\cdot))\frac{\partial^2m}{\partial x_i\partial x_j}(t,x)=0,\quad m(0,x)=m_0(x). \label{lq_3}
\end{align}
Let $P(\cdot)\in C^1([0,T];\br^{n\times n})$ be the solution of the standard Riccati equation \eqref{lq_1}. By using transformation \eqref{trans} with $\lambda(t):=A-BR^{-1}B^*P(t)$, in view of Remark~\ref{rk:FP}, we know that PDE \eqref{lq_3} has a solution $m\in C^{\frac{1}{2}}([0,T],\p_1(\br^n))$ and $m(t)=\lr(\bar{X}_t)$, where
\begin{equation*}
		\bar{X}_t=\xi_0+\int_0^t(A-BR^{-1}B^*P(s))\bar{X}_sds+\int_0^t\sigma(s,m(s,\cdot))dW_s,\quad t\in[0,T].
\end{equation*}
Then equation \eqref{lq_2} has a classical solution $\beta\in C^1[0,T]$. Similar to the proof of Theorem~\ref{thm_mfg}, we know that the feedback $\alpha(t,x)$ is an optimal control of $P(b,\sigma,f,g,\xi_0)$.
\end{proof}

\begin{remark}
	Bensoussan et al. \cite{AB2,AB3} solve the FBPDEs for linear quadratic mean field games under the condition that the volatility $\sigma$ is a constant. Our work cannot include Bensoussan's results because our drift here is independent of the distribution variable $m$. However, our work also go beyond their framework by allowing the volatility $\sigma$ to depend on $m$. Our boundedness assumption in $m$ is expected to be relaxed by using some appropriate approximation techniques. 
\end{remark}

\footnotesize

	
\end{document}